\providecommand{\U}[1]{\protect\rule{.1in}{.1in}}
\newtheorem{theorem}{Theorem}
\newtheorem{definition}[theorem]{Definition}
\newtheorem{lemma}[theorem]{Lemma}
\newtheorem{proposition}[theorem]{Proposition}
\newtheorem{remark}[theorem]{Remark}
\newenvironment{proof}[1][Proof]{\noindent\textbf{#1.} }{\ \rule{0.5em}{0.5em}}
\begin{document}

\title{Comet and moon solutions in the time-dependent \\restricted $(n+1)$-body problem}
\author{Carlos Barrera \thanks{Depto. Matem\'{a}ticas y Mec\'{a}nica IIMAS,
Universidad Nacional Aut\'{o}noma de M\'{e}xico, Apdo. Postal 20-726, 01000
Ciudad de M\'{e}xico, M\'{e}xico. crba@ciencias.unam.mx}, Abimael Bengochea
\thanks{Department of Mathematics, ITAM, R\'io Hondo 1, 01080, Ciudad de
M\'exico, M\'exico. abimael.bengochea@itam.mx} , Carlos Garc\'{\i}a-Azpeitia
\thanks{Depto. Matem\'{a}ticas y Mec\'{a}nica IIMAS, Universidad Nacional
Aut\'{o}noma de M\'{e}xico, Apdo. Postal 20-726, 01000 Ciudad de M\'{e}xico,
M\'{e}xico. cgazpe@mym.iimas.unam.mx} }
\maketitle

\begin{abstract}
The time-dependent restricted $(n+1)$-body problem concerns the study of a
massless body (satellite) under the influence of the gravitational field
generated by $n$ primary bodies following a periodic solution of the $n$-body
problem. We prove that the satellite has periodic solutions close to the
large-amplitude circular orbits of the Kepler problem (comet solutions), and
in the case that the primaries are in a relative equilibrium, close to
small-amplitude circular orbits near a primary body (moon solutions). The
comet and moon solutions are constructed with the application of a
Lyapunov-Schmidt reduction to the action functional. In addition, using
reversibility technics, we compute numerically the comet and moon solutions
for the case of four primaries following the super-eight choreography.

\end{abstract}

\section{Introduction}

The $n$-body problem consists of $n$ masses interacting under gravitational
forces. Due to the complexity of the $n$-body problem, particular cases of
this problem have been thoroughly studied such as the \textit{restricted
$(n+1)$-body problem}, which consists of $n$ primary bodies that follow a
general solution of the $n$-body problem and an extra body of negligible mass
(satellite). The satellite does not influence the movement of the $n$
primaries but it is influenced by the gravitational forces of the $n$ primaries.

A simplification of the restricted $(n+1)$-body problem assumes that the $n$
primaries are in a relative equilibrium (see \cite{ArEl04,BaEl04,GaIz10,Ka08,
LS11,MoZe}). This problem is a generalization of the classical
\emph{restricted }$3$\emph{-body problem} studied in \cite{DoRo07,
GoLlSi00,MaRy04, MeHa91,MoZe}. Other works have considered the case in which
the $n$ primaries follow a homographic elliptic solution
\cite{BeMaOv2017,Broucke1969, SzeGia64}. The case of the so-called
\emph{Sitnikov problem} considers the case of $2$ primaries. While the
analytic study of the restricted $(n+1)$-body problem for $n$ primaries
following elliptic homographic solutions has been the focus of many research
papers, few results have considered the problem where the primaries describe a
general periodic solution \cite{Lara2019}.

In this work, we present an analytical study of the \emph{time-dependent
restricted }$(n+1)$\emph{-body problem} for a general homogeneous potential,
where the primary bodies describe a general periodic solution in the plane.
Specifically, the Newton equation for a satellite with position $q(t)\in
\mathbb{R}^{2}$ is%
\begin{equation}
\ddot{q}(t)=-\sum_{j=1}^{n}m_{j}\frac{q(t)-q_{j}(t)}{\left\Vert q(t)-q_{j}%
(t)\right\Vert ^{\alpha+1}},
\end{equation}
where $q_{j}(t)\in\mathbb{R}^{2}$ represents the position of the $j$th body
with mass $m_{j}$ and $\Vert\cdot\Vert$ is the euclidean norm. We assume that
$\alpha\geq1$, where $\alpha=2$ is the gravitational case. We also consider,
without loss of generality, that the solution of the $n$-body problem
$q_{j}(t)$ is $2\pi$-periodic. Two limiting problems for the satellite will be
considered. In the first case, the satellite is far from the primary bodies
(\textit{the comet problem)}. In the second case, the satellite is close to
one of the primary bodies (\textit{the moon problem)}.

Specifically, in Theorem \ref{comet} we prove that for each integer
$\mathfrak{p}$ there is an integer $\mathfrak{q}_{0}$ such that for each
integer $\mathfrak{q}>\mathfrak{q}_{0}$, the comet has at least two
$2\pi\mathfrak{q}$-periodic solutions$\ $of the form
\[
q(t)=\varepsilon^{-1}e^{J\left(  \theta+\mathfrak{p}t/\mathfrak{q}\right)
}x_{0}+\mathcal{O}(\varepsilon),\qquad\varepsilon=\left(  \mathfrak{p}%
/\mathfrak{q}\right)  ^{2/(\alpha+1)},
\]
where $x_{0}=(1,0)\in\mathbb{R}^{2}$, $J$ is the symplectic matrix, $\theta
\in\lbrack0,2\pi]$ is a phase determined by the periodic solution of the
primaries and $\mathcal{O}(\varepsilon)$ is a $2\pi\mathfrak{q}$-periodic
function of order $\varepsilon$. The amplitude of the solution $\varepsilon
^{-1}$ is large and the frequency $\mathfrak{p}/\mathfrak{q}$ is small. Thus
the comet winds around the origin $\mathfrak{p}$ times while the primary
bodies travel their orbits $\mathfrak{q}$ times.

In the moon problem we require that the primaries are in a relative
equilibrium. Under this assumption we prove in Theorem \ref{moon} that for
each integer $\mathfrak{q}$ there is an integer $\mathfrak{p}_{0}$ such that
for each integer $\mathfrak{p}>\mathfrak{p}_{0}$, the moon has at least two
$2\pi\mathfrak{q}$-periodic solutions$\ $of the form
\[
q(t)=q_{1}(t)+\varepsilon e^{J(\theta+\mathfrak{p}t/\mathfrak{q})}%
x_{0}+\mathcal{O}(\varepsilon^{3}),\qquad\varepsilon=\left(  \mathfrak{p}%
/\mathfrak{q}\right)  ^{-2/(\alpha+1)}~.
\]
The amplitude of the solution $\varepsilon$ around the first primary body is
small and the frequency $\mathfrak{p}/\mathfrak{q}$ is large. Thus the moon
winds around one of the primaries $\mathfrak{p}$ times while the primaries
travel their periodic orbits $\mathfrak{q}$ times.

The regularized action functional of each of the problems is written (in
proper coordinates) as
\[
\mathcal{A}=\mathcal{A}_{0}+\mathcal{H}:\Omega\subset H_{2\pi}^{1}%
(\mathbb{R}^{2})\rightarrow\mathbb{R},
\]
where $H_{2\pi}^{1}$ is the Sobolev space of $2\pi$-periodic functions. Here,
$\mathcal{A}_{0}$ is the action functional of the Kepler problem and
$\mathcal{H}$ is an action functional of order $\varepsilon$. The functional
$\mathcal{A}_{0}$ has a $S^{1}$-set of critical points that consists of the
circular orbits of the Kepler problem. The set $\Omega$ is neighborhood of the
$S^{1}$-set in $H_{2\pi}^{1}$. We prove that $\mathcal{A}$ has at least two
critical points that persist from the isolated $S^{1}$-set of critical points
of the functional $\mathcal{A}_{0}$ for a small parameter $\varepsilon$. This
argument is based on a Lyapunov-Schmidt reduction similarly to \cite{FoGa},
where braids of the full $N$-body problem are constructed by replacing a body
in a \emph{central configuration} by two bodies (see also \cite{Fo}).

The gravitational case $\alpha=2$ is special because the circular orbits of
the Kepler problem are not isolated due to the existence of elliptic orbits.
The gravitational case is treated in Theorem \ref{grav} under the assumption
that the primaries form $m$-polygons at any time. This condition is satisfied
by many choreographies found in \cite{CaDoGa18b}, for example the super-eight
choreography with $n=4$ (see Remark \ref{ReC}), and also by many central
configurations of nested polygons with a central body (see Remark \ref{ReM}).

\begin{figure}[h]
\centering
\captionsetup{width=.9\linewidth}
\includegraphics[width=70mm]{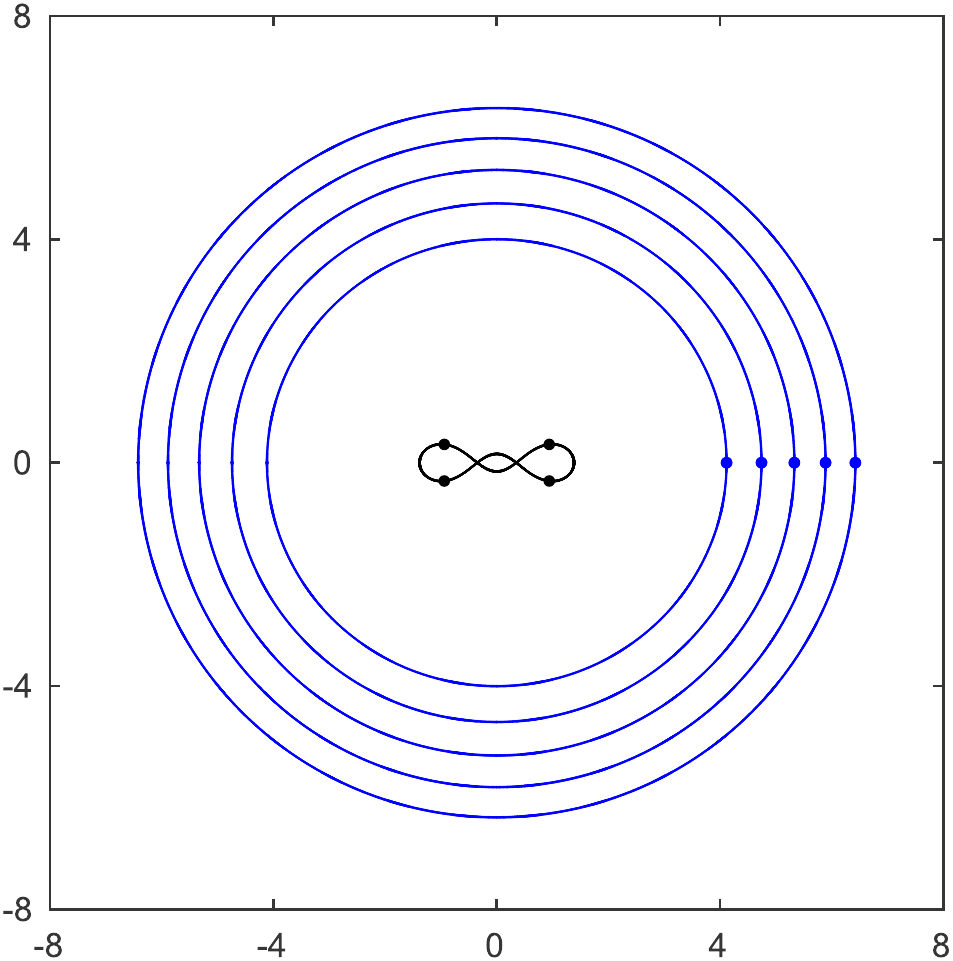}\includegraphics[width=70mm]{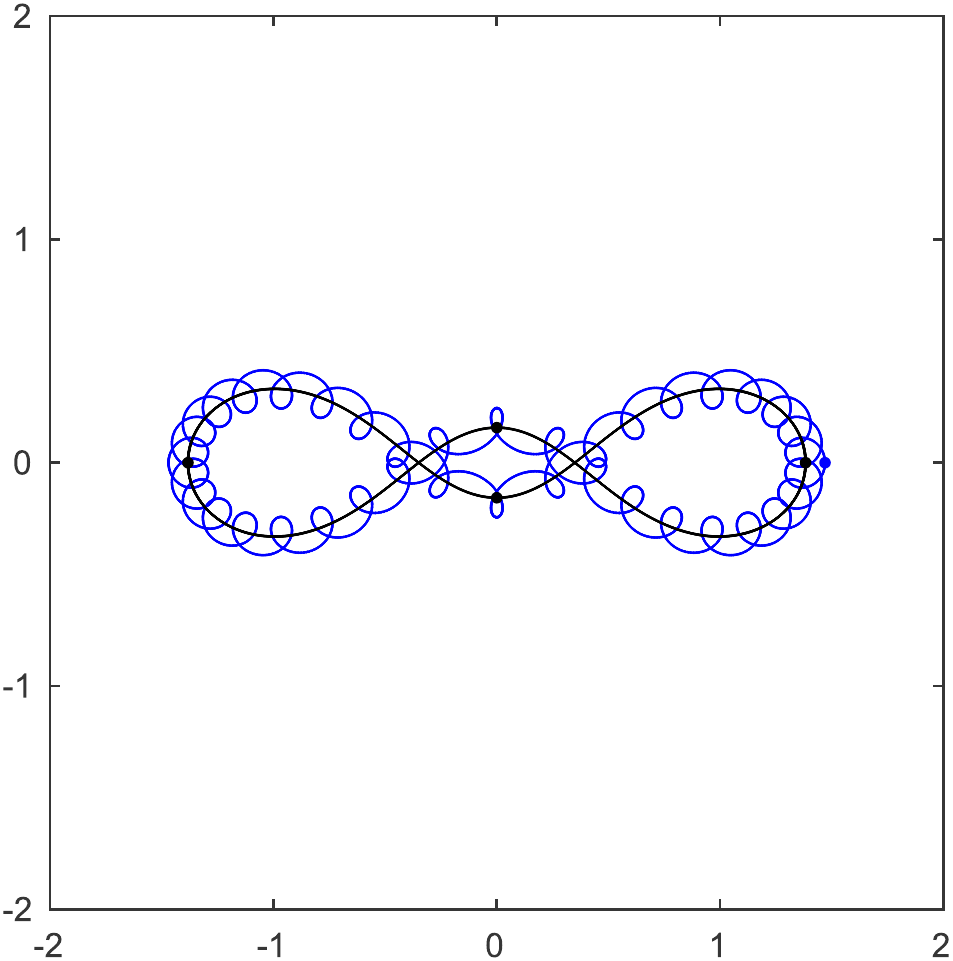}\caption{Left:
Multiple symmetric periodic orbits of comet type in the restricted 5-body
problem. Right: A symmetric periodic orbit of moon type in the restricted
5-body problem.}%
\label{figure1}%
\end{figure}

The method used to prove the existence of comet and moon solutions in the
time-dependent restricted body problem has limited applicability due to the
assumptions: (i) In the moon case, the periodic solution of the primaries
needs to be a relative equilibrium. (ii) In the gravitational case $\alpha=2$,
the periodic solution of the primaries needs to be symmetric. In Section 5, we
discuss briefly how to apply reversibility technics in order to show
numerically the existence of comet orbits (obtained in Theorem \ref{grav}) and
moon orbits that cannot be obtained with our theorems. Specifically, we
consider the case where four bodies with unitary mass follow the Gerver's
super-eight choreography \cite{Tomasz2003,Munoz2006,Mitsuru2014}. The fifth
body is the massless particle whose dynamics is defined by the force exerted
by the other four (Fig. \ref{figure1}). Technics of reversibility have been
successfully applied for studying periodic orbits of ordinary differential
equations \cite{Lamb1998}; see \cite{beng2013,beng2015,galan,Munoz1998} for
the case of the $N$-body problem. For more details on reversibility technics,
the interested reader is referred to \cite{Lara2019}, where comet and moon
orbits have been computed numerically for three primary bodies following the
eight choreography.

The rest of the paper proceeds as follows. In Section 2, we set the change of
variables and necessary hypothesis in order to write the functional
$\mathcal{A}$ as a perturbation of the functional for the Kepler problem
$\mathcal{A}_{0}$. In Section 3, we estimate the spectrum of the Hessian of
$\mathcal{A}_{0}$ in Fourier components (Proposition \ref{estimates}), in
order to make a Lyapunov-Schmidt reduction to a finite dimension. In Section
4, we prove the existence of comet and moon solutions as a consequence of our
main result (Theorem \ref{main}). Theorem \ref{main} cannot be applied
directly in the gravitational case; for this reason, we obtain a separate
result for the gravitational case under additional hypothesis. In Section 5,
we use the reversibility technics to compute numerically the comet and moon
solutions for four primaries following the super-eight choreography.

\section{Setting up the problem}

Let $q_{j}(t)\in\mathbb{R}^{2}$ be the positions of $n$ bodies with masses
$m_{j}$ for $j=1,...,n$. We assume that $q_{j}(t)$ is a periodic solution of
the $n$-body problem interacting under a general homogeneous potential. After
rescaling space and time, we can assume, without loss of generality, that the
solution $q_{j}(t)$ is $2\pi$-periodic and that the center of mass is at the
origin%
\[
\sum_{j=1}^{n}m_{j}q_{j}(t)=0,\qquad\sum_{j=1}^{n}m_{j}q_{j}(t)=0.
\]

The Newton equation for a satellite is%
\begin{equation}
\ddot{q}(t)=-\nabla_{q}U(q,t)=-\sum_{j=1}^{n}m_{j}\frac{q(t)-q_{j}%
(t)}{\left\Vert q(t)-q_{j}(t)\right\Vert ^{\alpha+1}}, \label{body}%
\end{equation}
where
\[
U(q,t)=-\sum_{j=1}^{n}m_{j}\phi_{\alpha}\left(  \left\Vert q-q_{j}\right\Vert
\right)  ,\qquad\phi_{\alpha}(\lambda)=\frac{1}{\alpha-1}\lambda^{1-\alpha}%
\]
when $\alpha>1$ and $\phi_{1}(\lambda)=-\log(\lambda).$

We define $\mathcal{A}_{0}$ as the action for the Kepler problem in rotating
coordinates,
\begin{equation}
\mathcal{A}_{0}(x)=\int_{0}^{2\pi}\left(  \frac{1}{2}\left\Vert \left(
\frac{\nu}{\omega}\partial_{\tau}+J\right)  x(\tau)\right\Vert ^{2}%
+\phi_{\alpha}(\left\Vert x(\tau)\right\Vert )\right)  ~d\tau,\qquad J=%
\begin{pmatrix}
0 & 1\\
-1 & 0
\end{pmatrix}
\text{.} \label{kepler}%
\end{equation}
The objective of this section is to write the action for the equation
\eqref{body} as $\mathcal{A}=\mathcal{A}_{0}+\mathcal{H}$, where $\mathcal{H}$
is a small perturbation with $\mathcal{H}(x)=\mathcal{O}(\varepsilon)$. The
parameter $\varepsilon^{-1}$ represents the amplitude of the comet and
$\varepsilon$ the distance of the moon to a primary body.

\subsection{The comet problem}

We assume in this section, without loss of generality, that%
\[
M=\sum_{j=1}^{n}m_{j}=1.
\]
The Kepler problem
\[
\ddot{q}=-\frac{q}{\left\Vert q\right\Vert ^{\alpha+1}}%
\]
admits solutions of the form
\[
q(t)=\varepsilon^{-1}e^{J\omega t}%
\begin{pmatrix}
1\\
0
\end{pmatrix}
,
\]
where the frequency $\omega$ and amplitude $\varepsilon^{-1}$ satisfy the
relation $\omega^{2}=\varepsilon^{(\alpha+1)}$. Note that $\varepsilon^{-1}$
is proportional to the distance between the satellite $q$ and the origin. To
obtain a continuation of the large-amplitude solutions ($\varepsilon
\rightarrow0$), we make the change of variables that scales the Kepler problem
as $t=\tau/\nu$ and
\begin{align}
q(t)  &  =\varepsilon^{-1}e^{J\omega\tau/\nu}x(\tau),\label{cambio_q_c}\\
q_{j}(t)  &  =e^{J\omega\tau/\nu}x_{j}(\tau),\enskip j=1,\cdots,n,
\label{cambio_qj_c}%
\end{align}
with $\omega^{2}=\varepsilon^{\alpha+1}$.

\begin{proposition}
Set $\omega^{2}=\varepsilon^{(\alpha+1)}$. For $\alpha\geq1$, the solutions of
restricted problem \eqref{body} in the coordinate $x(\tau)$ (given by
\eqref{cambio_q_c}) are critical points of the action
\[
\mathcal{A}(x)=\mathcal{A}_{0}(x)+\mathcal{H}(x),
\]
where $\mathcal{H}(x)=\int_{0}^{2\pi}h(x(\tau),\tau)d\tau$ with%
\[
h(x,\tau)=\sum_{j=1}^{n}m_{j}\left[  \phi_{\alpha}(\left\Vert x(\tau
)-\varepsilon x_{j}(\tau)\right\Vert )-\phi_{\alpha}(\left\Vert x(\tau
)\right\Vert )\right]  \text{.}%
\]
and $x_{j}(\tau)$ are given by \eqref{cambio_qj_c}.
\end{proposition}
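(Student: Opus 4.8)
The plan is to start from Newton's equation \eqref{body} and show directly that, after the change of variables \eqref{cambio_q_c}--\eqref{cambio_qj_c}, it becomes the Euler--Lagrange equation of the claimed functional $\mathcal{A}=\mathcal{A}_0+\mathcal{H}$. First I would compute how the acceleration transforms. Writing $t=\tau/\nu$ and $q(t)=\varepsilon^{-1}e^{J\omega\tau/\nu}x(\tau)$, the chain rule gives $\dot q = \varepsilon^{-1}\nu\, e^{J\omega\tau/\nu}\bigl((\omega/\nu)\,Jx+\partial_\tau x\bigr)$ and then $\ddot q = \varepsilon^{-1}\nu^{2}\, e^{J\omega\tau/\nu}\bigl((\omega/\nu)\partial_\tau+J\bigr)^{2}x$, using that $J$ commutes with $e^{J\omega\tau/\nu}$ and $J^2=-I$. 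On the right-hand side of \eqref{body}, since $q_j(t)=e^{J\omega\tau/\nu}x_j(\tau)$, each difference factors as $q-q_j=\varepsilon^{-1}e^{J\omega\tau/\nu}(x-\varepsilon x_j)$, and because $e^{J\omega\tau/\nu}$ is orthogonal the norm pulls out as $\|q-q_j\|=\varepsilon^{-1}\|x-\varepsilon x_j\|$. Substituting and cancelling the common factor $\varepsilon^{-1}e^{J\omega\tau/\nu}$ (and dividing by $\nu^2$) turns \eqref{body} into
\[
\Bigl(\tfrac{\omega}{\nu}\partial_\tau+J\Bigr)^{2}x
= -\,\varepsilon^{\alpha+1}\sum_{j=1}^{n} m_j\,\frac{x-\varepsilon x_j}{\|x-\varepsilon x_j\|^{\alpha+1}},
\]
and using $\omega^{2}=\varepsilon^{\alpha+1}$ the scalar factor on the right is exactly $\omega^{2}$, so after dividing by $\omega^2$ (and relabeling $\nu/\omega$ consistently with \eqref{kepler}) both sides are $\mathcal{O}(1)$ and the equation has the form ``Kepler operator applied to $x$'' equals ``gradient of a potential in $x$''.

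Next I would identify this transformed equation as the critical point equation of $\mathcal{A}$. Recall that $\mathcal{A}_0$ in \eqref{kepler} has first variation $\delta\mathcal{A}_0(x)\cdot v=\int_0^{2\pi}\langle(\tfrac{\nu}{\omega}\partial_\tau+J)x,(\tfrac{\nu}{\omega}\partial_\tau+J)v\rangle + \langle\nabla\phi_\alpha(\|x\|)\,,v\rangle\,d\tau$; integrating by parts in $\tau$ (legitimate on $H^1_{2\pi}$ against $2\pi$-periodic test functions, with no boundary terms) and using that the adjoint of $\tfrac{\nu}{\omega}\partial_\tau+J$ is $-\tfrac{\nu}{\omega}\partial_\tau+J^{\!\top}=-(\tfrac{\nu}{\omega}\partial_\tau-J)$, this reduces to $\int_0^{2\pi}\langle -(\tfrac{\nu}{\omega}\partial_\tau - J)(\tfrac{\nu}{\omega}\partial_\tau+J)x + \nabla\phi_\alpha(\|x\|),\,v\rangle\,d\tau$. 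The operator $-(\tfrac{\nu}{\omega}\partial_\tau-J)(\tfrac{\nu}{\omega}\partial_\tau+J)$ expands (using that $\partial_\tau$ and $J$ commute) to $-\bigl((\tfrac{\nu}{\omega})^2\partial_\tau^2 - J^2\bigr) = -(\tfrac{\nu}{\omega})^2\partial_\tau^2 - I$, which matches $-(\tfrac{\nu}{\omega}\partial_\tau + J)^2$ up to sign, i.e. the Euler--Lagrange operator of $\mathcal{A}_0$ is precisely the Kepler operator obtained above. For the perturbation, $\delta\mathcal{H}(x)\cdot v=\int_0^{2\pi}\langle\nabla_x h(x,\tau),v\rangle\,d\tau$, and since $h(x,\tau)=\sum_j m_j[\phi_\alpha(\|x-\varepsilon x_j\|)-\phi_\alpha(\|x\|)]$ with $\phi_\alpha'(\lambda)=-\lambda^{-\alpha}$, one has $\nabla_x h = \sum_j m_j\bigl(-\frac{x-\varepsilon x_j}{\|x-\varepsilon x_j\|^{\alpha+1}} + \frac{x}{\|x\|^{\alpha+1}}\bigr)$. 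Adding $\nabla_x\phi_\alpha(\|x\|)=-x/\|x\|^{\alpha+1}$ from $\mathcal{A}_0$, the isolated Kepler term cancels and what remains is exactly $-\sum_j m_j \frac{x-\varepsilon x_j}{\|x-\varepsilon x_j\|^{\alpha+1}}$, the right-hand side of the transformed Newton equation. Hence $\delta\mathcal{A}(x)=0$ in the weak sense is equivalent to the transformed equation, and by elliptic regularity for periodic ODEs any $H^1$ critical point is a classical solution, closing the equivalence.

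The only genuinely delicate points are bookkeeping: keeping track of the factors $\varepsilon^{-1}$, $\nu$, $\omega$ through two substitutions and making sure the relation $\omega^2=\varepsilon^{\alpha+1}$ is invoked at exactly the right moment so that the perturbation $\mathcal{H}$ emerges of size $\mathcal{O}(\varepsilon)$ (this follows from Taylor expanding $\phi_\alpha(\|x-\varepsilon x_j\|)-\phi_\alpha(\|x\|)$ in $\varepsilon$ on the region where $\|x\|$ is bounded away from $0$, which is where $\Omega$ lives). I would also need to note that the case $\alpha=1$, where $\phi_1(\lambda)=-\log\lambda$ and $\phi_1'(\lambda)=-1/\lambda=-\lambda^{-\alpha}$ with $\alpha=1$, is covered by the same formulas, so no separate argument is required. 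The substitution and the variational identification are otherwise routine; I do not expect any conceptual obstacle, only the need to be careful that the time-rescaling $t=\tau/\nu$ is compatible with requiring $x(\tau)$ to be $2\pi$-periodic, which forces $\nu$ to be chosen so that the primaries' period $2\pi$ in $t$ corresponds to the period of $x_j(\tau)$ — a consistency condition that should be recorded explicitly.
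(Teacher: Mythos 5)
Your overall strategy is exactly the paper's: transform Newton's equation through \eqref{cambio_q_c}--\eqref{cambio_qj_c}, use $\omega^{2}=\varepsilon^{\alpha+1}$ to normalize the right-hand side, and identify the result as the Euler--Lagrange equation of $\mathcal{A}_{0}+\mathcal{H}$ (the paper does the variational step by writing out $\frac{d}{d\tau}\frac{\partial A}{\partial x'}$ and $\frac{\partial A}{\partial x}$, which is the same computation as your first-variation/integration-by-parts version). However, two algebraic steps are wrong as written and would not produce the correct equation if followed literally. First, your formula for the acceleration: from your (correct) first derivative $\dot q=\varepsilon^{-1}e^{J\omega t}\left(\omega Jx+\nu\partial_{\tau}x\right)$ one gets $\ddot q=\varepsilon^{-1}e^{J\omega t}\left(\nu\partial_{\tau}+\omega J\right)^{2}x=\varepsilon^{-1}\omega^{2}e^{J\omega t}\left(\tfrac{\nu}{\omega}\partial_{\tau}+J\right)^{2}x$, whereas you wrote $\varepsilon^{-1}\nu^{2}e^{J\omega t}\left(\tfrac{\omega}{\nu}\partial_{\tau}+J\right)^{2}x=\varepsilon^{-1}e^{J\omega t}\left(\omega\partial_{\tau}+\nu J\right)^{2}x$, which swaps the roles of $\nu$ and $\omega$; the prefactor that must absorb $\varepsilon^{\alpha+1}$ is $\omega^{2}$, not $\nu^{2}$, and that is precisely why the hypothesis $\omega^{2}=\varepsilon^{\alpha+1}$ makes the coefficient equal to one.

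Second, and more seriously, your adjoint computation is off by a sign: since $J^{\top}=-J$ and $\partial_{\tau}^{\ast}=-\partial_{\tau}$ on periodic functions, the operator $L=\tfrac{\nu}{\omega}\partial_{\tau}+J$ satisfies $L^{\ast}=-L$, not $-\left(\tfrac{\nu}{\omega}\partial_{\tau}-J\right)$ as you wrote. With your version you obtain $L^{\ast}L=-\left(\tfrac{\nu}{\omega}\right)^{2}\partial_{\tau}^{2}-I$, which is \emph{not} equal to $-L^{2}=-\left(\tfrac{\nu}{\omega}\right)^{2}\partial_{\tau}^{2}-\tfrac{2\nu}{\omega}J\partial_{\tau}+I$ ``up to sign'' or otherwise: it drops the Coriolis cross term and flips the sign of the centrifugal term, so the claimed identification of the Euler--Lagrange operator of $\mathcal{A}_{0}$ with the Kepler operator fails as stated. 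The correct statement is simply $L^{\ast}L=-L^{2}$, which gives the Euler--Lagrange equation $\left(\tfrac{\nu}{\omega}\partial_{\tau}+J\right)^{2}x=\nabla_{x}\phi_{\alpha}(\Vert x\Vert)+\nabla_{x}h$, matching the transformed Newton equation. Both slips are local and repairable without changing your plan; you should also record explicitly that the cancellation of the isolated Kepler term in $\nabla_{x}\phi_{\alpha}(\Vert x\Vert)+\nabla_{x}h$ uses the normalization $\sum_{j}m_{j}=1$ assumed in the comet section.
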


\begin{proof}
Using the change of variables \eqref{cambio_q_c} and $t=\tau/\nu$, the first
term of \eqref{body} becomes
\[
\ddot{q}(t)=\varepsilon^{-1}\omega^{2}e^{J\omega\tau/\nu}\left(  \dfrac{\nu
}{\omega}\partial_{\tau}+J\right)  ^{2}x(\tau).
\]
Replacing this term in \eqref{body} and using \eqref{cambio_qj_c}, the
equation of motion becomes%
\begin{equation}
\left(  \frac{\nu}{\omega}\partial_{\tau}+J\right)  ^{2}x(\tau)=-\sum
_{j=1}^{n}m_{j}\frac{x(\tau)-\varepsilon x_{j}(\tau)}{\left\Vert
x(\tau)-\varepsilon x_{j}(\tau)\right\Vert ^{\alpha+1}}. \label{eqcomet}%
\end{equation}
On the other hand, if $\mathcal{A}(x)=\int_{0}^{2\pi}A(\tau,x(\tau),x^{\prime
}(\tau))d\tau$, then
\begin{align*}
\dfrac{d}{d\tau}\dfrac{\partial A}{\partial x^{\prime}}  &  =\dfrac{\nu^{2}%
}{\omega^{2}}x^{\prime\prime}(\tau)+\dfrac{\nu}{\omega}Jx^{\prime}(\tau),\\
\dfrac{\partial A}{\partial x}  &  =-\dfrac{\nu}{\omega}Jx^{\prime}%
(\tau)+x(\tau)-\sum_{j=1}^{n}m_{j}\dfrac{x(\tau)-\varepsilon x_{j}(\tau
)}{\left\Vert x(\tau)-\varepsilon x_{j}(\tau)\right\Vert ^{\alpha+1}}.
\end{align*}
Using the Euler-Lagrange equation, we get equation \eqref{eqcomet}.
\end{proof}

We need to define $h(x,\tau)$ in the space of $2\pi$-periodic functions.

\begin{proposition}
If
\[
\omega=\mathfrak{p}/\mathfrak{q},\qquad\nu=1/\mathfrak{q},
\]
then the function $h(t,x)$ defined in Proposition 1 is $2\pi$-periodic and
$\nabla_{x}h(x,\tau)=\mathcal{O}(\varepsilon^{2})$.
\end{proposition}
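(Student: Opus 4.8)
The plan is to treat the two claims in turn. For the $2\pi$-periodicity, first unwind the definition of $x_j$: combining \eqref{cambio_qj_c} with $t=\tau/\nu$ gives $x_j(\tau)=e^{-J\omega\tau/\nu}q_j(\tau/\nu)$, and with $\nu=1/\mathfrak{q}$, $\omega=\mathfrak{p}/\mathfrak{q}$ this is
\[
x_j(\tau)=e^{-J\mathfrak{p}\tau}\,q_j(\mathfrak{q}\tau).
\]
Since $\mathfrak{p}\in\mathbb{Z}$, the rotation $\tau\mapsto e^{-J\mathfrak{p}\tau}$ has period $2\pi$; since $q_j$ is $2\pi$-periodic by hypothesis, $\tau\mapsto q_j(\mathfrak{q}\tau)$ has period $2\pi/\mathfrak{q}$, hence also period $2\pi$. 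A product of two $2\pi$-periodic functions is $2\pi$-periodic, so each $x_j(\tau)$ is $2\pi$-periodic; as the $\tau$-dependence of $h$ in Proposition 1 enters only through the $x_j(\tau)$, the function $h(x,\tau)$ is $2\pi$-periodic.

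For the gradient estimate, write $G(y):=\nabla_y\phi_\alpha(\|y\|)=-y/\|y\|^{\alpha+1}$, so that
\[
\nabla_x h(x,\tau)=\sum_{j=1}^n m_j\big[G(x-\varepsilon x_j(\tau))-G(x)\big].
\]
The zeroth-order term in $\varepsilon$ has already been subtracted off in the definition of $h$. Taylor-expanding $G$ about $y=x$ gives $G(x-\varepsilon x_j)=G(x)-\varepsilon\,DG(x)[x_j]+\mathcal{O}(\varepsilon^2)$, whence
\[
\nabla_x h(x,\tau)=-\varepsilon\,DG(x)\Big[\sum_{j=1}^n m_j x_j(\tau)\Big]+\mathcal{O}(\varepsilon^2).
\]
The bracket is the center of mass of the scaled primaries, $\sum_j m_j x_j(\tau)=e^{-J\mathfrak{p}\tau}\sum_j m_j q_j(\mathfrak{q}\tau)=0$, by the normalization $\sum_j m_j q_j(t)=0$ imposed at the start of Section 2. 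Hence the first-order term vanishes identically and $\nabla_x h(x,\tau)=\mathcal{O}(\varepsilon^2)$.

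The one point that needs a word of care — and the closest thing to an obstacle here — is the uniformity of the remainder. The Taylor estimate is governed by a bound on $D^2G$ along the segment joining $x$ and $x-\varepsilon x_j$, and $D^2G$ is singular at the origin. However $h$ is only ever evaluated for $x$ in the neighborhood $\Omega$ of the $S^1$-set of circular Kepler orbits, where $\|x\|$ stays in a fixed compact subinterval of $(0,\infty)$; since the continuous periodic maps $x_j(\tau)$ are uniformly bounded, for $\varepsilon$ small enough the point $x-\varepsilon x_j(\tau)$ remains in a fixed annulus on which $D^2G$ is bounded. Therefore the $\mathcal{O}(\varepsilon^2)$ bound is uniform in $\tau\in[0,2\pi]$ and in $x\in\Omega$, which is what is needed for the Lyapunov-Schmidt argument in the subsequent sections.
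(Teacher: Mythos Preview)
Your proof is correct and follows essentially the same route as the paper: both arguments establish $2\pi$-periodicity of $x_j(\tau)=e^{-J\mathfrak{p}\tau}q_j(\mathfrak{q}\tau)$ from the integrality of $\mathfrak{p},\mathfrak{q}$, and both Taylor-expand $\nabla_x h$ in $\varepsilon$ and kill the linear term using the center-of-mass condition $\sum_j m_j x_j(\tau)=0$. The only differences are cosmetic---the paper carries out the expansion in complex notation with explicit coefficients, whereas you write it abstractly via $G$ and $DG$---and your added paragraph on the uniformity of the $\mathcal{O}(\varepsilon^2)$ remainder over the annulus $\Omega$ is a welcome clarification that the paper leaves implicit.
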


\begin{proof}
Since $q_{j}(t)$ is $2\pi$-periodic, then $q_{j}(\tau/\nu)$ is $2\pi
/\mathfrak{q}$-periodic and $e^{-J\omega t/\nu}$ is a $2\pi/\mathfrak{p}$-periodic matrix.
Therefore, $x_{j}(\tau)$ is $2\pi$-periodic and $h(x,\tau)=h(x,\tau+2\pi)$.
Since $\sum_{j=1}^{n}m_{j}x_{j}=0$, using complex identification of
$x\in\mathbb{C}$, then%
\begin{align*}
\nabla_{x}h(x,\tau)  &  =\sum_{j=1}^{n}m_{j}\left(  \alpha\frac{x}{\left\vert
x\right\vert ^{\alpha+2}}x_{j}\varepsilon+\frac{\alpha(\alpha+1)}{2}\frac
{x}{\left\vert x\right\vert ^{\alpha+3}}x_{j}^{2}\varepsilon^{2}%
+\mathcal{O}(\varepsilon^{3})\right) \\
&  =\left(  \sum_{j=1}^{n}m_{j}\frac{\alpha(\alpha+1)}{2}\frac{x}{\left\vert
x\right\vert ^{\alpha+3}}x_{j}^{2}\right)  \varepsilon^{2}+\mathcal{O}%
(\varepsilon^{3})\text{.}%
\end{align*}

\end{proof}

We look for comet solutions in which the amplitude is very large. That is, we
need to take $\varepsilon\rightarrow0$ and $\omega^{2}=\varepsilon
^{(\alpha+1)}\rightarrow0$. Because the equation of motion for $x(\tau)$ has
the term $\left(  \frac{\nu}{\omega}\partial_{\tau}+J\right)  ^{2}x(\tau)$ and
$\omega\rightarrow0$, we need $\nu/\omega\geq\delta>0.$ Since $\nu/\omega
=1/\mathfrak{p}$, we achieve this by fixing $\mathfrak{p}$ and letting $\mathfrak{q}\rightarrow\infty$.

\subsection{The moon problem}

We assume that the satellite follows the first body. After rescaling space and
time we can assume, without loss of generality, that $m_{1}=1$. To find the
moon solutions, we define the time scale $t=\tau/\nu$ and the change of
variables
\begin{align}
q(t)  &  =q_{1}(t)+\varepsilon e^{J\omega\tau/\nu}x(\tau),\label{cambio_q_m}\\
q_{j}(t)  &  =e^{J\omega\tau/\nu}x_{j}(\tau),\enskip j=1,\cdots,n,
\label{cambio_qj_m}%
\end{align}
where $\omega^{2}=\varepsilon^{-(\alpha+1)}$.

\begin{proposition}
Let $\alpha\geq1$ and $\omega^{2}=\varepsilon^{-(\alpha+1)}$. The solutions of
restricted problem \eqref{body} in the coordinate $x(\tau)$ (given by
\eqref{cambio_q_m}) are critical points of the action%
\[
\mathcal{A}(x)=\mathcal{A}_{0}(x)+\mathcal{H}(x),
\]
where $\mathcal{H}(x)=\int_{0}^{2\pi}h(x,\tau)d\tau$ with
\[
h(x,\tau)=\left\{
\begin{matrix}
\varepsilon^{\alpha-1}\displaystyle\sum_{j=2}^{n}m_{j}\left[  \phi_{\alpha
}\left(  \left\Vert x_{1}(\tau)-x_{j}(\tau)+\varepsilon x(\tau)\right\Vert
\right)  -\nabla\phi_{\alpha}(\left\Vert x_{1}(\tau)-x_{j}(\tau)\right\Vert
)\cdot\varepsilon x(\tau)\right]  ,~\alpha>1\\
\displaystyle\sum_{j=2}^{n}m_{j}\left[  -\log\left(  \dfrac{\left\Vert
x_{1}(\tau)-x_{j}(\tau)+\varepsilon x(\tau)\right\Vert }{\left\Vert x_{1}%
(\tau)-x_{j}(\tau)\right\Vert }\right)  +\dfrac{x_{1}(\tau)-x_{j}(\tau
)}{\left\Vert x_{1}(\tau)-x_{j}(\tau)\right\Vert ^{2}}\cdot\varepsilon
x(\tau)\right]  ,~\alpha=1
\end{matrix}
\right.  ,
\]
and $x_{j}(\tau)$ are given by \eqref{cambio_qj_m}.
\end{proposition}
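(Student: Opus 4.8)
The plan is to follow the two-step argument used in the proof of Proposition 1: first substitute the change of variables \eqref{cambio_q_m}--\eqref{cambio_qj_m} together with $t=\tau/\nu$ into Newton's equation \eqref{body} to obtain the equation of motion for $x(\tau)$, and then check that this equation is precisely the Euler--Lagrange equation of $\mathcal{A}_0+\mathcal{H}$. For the first step, differentiating $q(t)=q_1(t)+\varepsilon e^{J\omega\tau/\nu}x(\tau)$ twice in $t$ gives, exactly as in the comet case, $\ddot q(t)=\ddot q_1(t)+\varepsilon\omega^2 e^{J\omega\tau/\nu}\bigl(\tfrac{\nu}{\omega}\partial_\tau+J\bigr)^2 x(\tau)$, now with the extra term $\ddot q_1(t)$. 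On the right-hand side of \eqref{body}, the $j=1$ summand equals $-\varepsilon^{-\alpha}e^{J\omega\tau/\nu}\,x/\Vert x\Vert^{\alpha+1}$ (using $m_1=1$), while for $j\ge 2$ we use \eqref{cambio_qj_m} to write $q-q_j=e^{J\omega\tau/\nu}(x_1-x_j+\varepsilon x)$. The key point is that $q_1(t)$ is a coordinate of a solution of the $n$-body problem, so $\ddot q_1=-\sum_{j\ge2}m_j(q_1-q_j)/\Vert q_1-q_j\Vert^{\alpha+1}=-\sum_{j\ge2}m_j e^{J\omega\tau/\nu}(x_1-x_j)/\Vert x_1-x_j\Vert^{\alpha+1}$, which cancels the $\mathcal{O}(1)$ part of the force exerted by the primaries $j\ge2$. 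Removing the common factor $e^{J\omega\tau/\nu}$ and dividing by $\varepsilon\omega^2=\varepsilon^{-\alpha}$ (this is where the normalization $\omega^2=\varepsilon^{-(\alpha+1)}$ enters), one is left with
\[
\Bigl(\tfrac{\nu}{\omega}\partial_\tau+J\Bigr)^2 x=-\frac{x}{\Vert x\Vert^{\alpha+1}}-\varepsilon^{\alpha}\sum_{j=2}^{n}m_j\left[\frac{x_1-x_j+\varepsilon x}{\Vert x_1-x_j+\varepsilon x\Vert^{\alpha+1}}-\frac{x_1-x_j}{\Vert x_1-x_j\Vert^{\alpha+1}}\right].
\]

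For the second step, writing $\mathcal{A}(x)=\int_0^{2\pi}A(\tau,x,x')\,d\tau$ with $A=A_0+h$ and $h=h(\tau,x)$, one has $\frac{d}{d\tau}\partial_{x'}A=\tfrac{\nu^2}{\omega^2}x''+\tfrac{\nu}{\omega}Jx'$ and, using $\phi_\alpha'(\lambda)=-\lambda^{-\alpha}$, $J^2=-I$ and $J^\top=-J$ exactly as in the proof of Proposition 1, $\partial_xA=-\tfrac{\nu}{\omega}Jx'+x-x/\Vert x\Vert^{\alpha+1}+\nabla_x h$. Hence the Euler--Lagrange equation reads $\bigl(\tfrac{\nu}{\omega}\partial_\tau+J\bigr)^2 x=-x/\Vert x\Vert^{\alpha+1}+\nabla_x h$, and it only remains to identify $\nabla_x h(x,\tau)$ with $-\varepsilon^{\alpha}\sum_{j\ge2}m_j[\cdots]$. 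Setting $y_j=x_1(\tau)-x_j(\tau)$, for $\alpha>1$ a one-line differentiation gives $\nabla_x\phi_\alpha(\Vert y_j+\varepsilon x\Vert)=-\varepsilon\,(y_j+\varepsilon x)/\Vert y_j+\varepsilon x\Vert^{\alpha+1}$ and $\nabla_x\!\bigl(-\nabla\phi_\alpha(\Vert y_j\Vert)\cdot\varepsilon x\bigr)=\varepsilon\,y_j/\Vert y_j\Vert^{\alpha+1}$, so the prefactor $\varepsilon^{\alpha-1}$ in the definition of $h$ produces precisely $\nabla_x h=-\varepsilon^{\alpha}\sum_{j\ge2}m_j[\cdots]$, matching the equation above. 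For $\alpha=1$ the computation is identical with $\phi_1=-\log$ and $\phi_1'(\lambda)=-1/\lambda$: the linear term $\tfrac{y_j}{\Vert y_j\Vert^{2}}\cdot\varepsilon x$ in $h$ is exactly $-\nabla\phi_1(\Vert y_j\Vert)\cdot\varepsilon x$, and the additive $x$-independent term in $h$ (namely $-\log\Vert y_j\Vert$, present only when $\alpha=1$) does not affect $\nabla_x h$; thus the two displayed expressions for $h$ are, up to a $\tau$-dependent constant, the single formula $h=\varepsilon^{\alpha-1}\sum_{j\ge2}m_j\bigl[\phi_\alpha(\Vert y_j+\varepsilon x\Vert)-\phi_\alpha(\Vert y_j\Vert)-\nabla\phi_\alpha(\Vert y_j\Vert)\cdot\varepsilon x\bigr]$.

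I do not anticipate a genuine obstacle here: the entire content is the bookkeeping of the cancellation coming from the $n$-body equation for $q_1$ together with the matching of powers of $\varepsilon$. The only points deserving a word of care are that $h$, and hence $\mathcal{H}$, is well defined, which needs $\Vert x_1(\tau)-x_j(\tau)+\varepsilon x\Vert>0$ and holds uniformly in $\tau$ on the neighborhood $\Omega$ of the circular Kepler orbits once $\varepsilon$ is small, since the collision-free primaries keep $\Vert x_1-x_j\Vert$ bounded away from $0$; and that the linear correction subtracted in $h$ is chosen precisely so that its gradient absorbs the $\ddot q_1$ cancellation, which is what makes $\mathcal{H}$ a bona fide perturbation of $\mathcal{A}_0$.
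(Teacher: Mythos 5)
Your proposal is correct and follows essentially the same route as the paper's proof: substitute \eqref{cambio_q_m}--\eqref{cambio_qj_m} into \eqref{body}, use the $n$-body equation for $q_{1}$ to cancel the $\mathcal{O}(1)$ force from the primaries $j\geq2$, divide by $\varepsilon\omega^{2}=\varepsilon^{-\alpha}$ to arrive at \eqref{eqmoon}, and then verify that this is the Euler--Lagrange equation of $\mathcal{A}_{0}+\mathcal{H}$. The additional observations (well-definedness away from collisions, and the unification of the $\alpha>1$ and $\alpha=1$ formulas up to a $\tau$-dependent constant) are correct but not needed for the statement.
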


\begin{proof}
Using the change of variables \eqref{cambio_q_m}, the left side of the
equation \eqref{body} becomes
\[
\ddot{q}(t)=\ddot{q_{1}}(t)+\varepsilon\omega^{2}e^{J\omega\tau/\nu}\left(
\dfrac{\nu}{\omega}\partial_{\tau}+J\right)  ^{2}x(\tau).
\]
But $q_{1}(t)$ is a solution of the $n$-body problem, therefore
\[
\ddot{q_{1}}(t)=-\sum_{j=2}^{n}m_{j}\frac{q_{1}(t)-q_{j}(t)}{\left\Vert
q_{1}(t)-q_{j}(t)\right\Vert ^{\alpha+1}}\text{.}%
\]
Assuming $\alpha>1$ and expanding the sum in the right side of \eqref{body},%
\[
\sum_{j=1}^{n}m_{j}\frac{q(t)-q_{j}(t)}{\left\Vert q(t)-q_{j}(t)\right\Vert
^{\alpha+1}}=\dfrac{e^{J\omega\tau/\nu}x(\tau)}{\varepsilon^{\alpha}\Vert
x(\tau)\Vert^{\alpha+1}}+\sum_{j=2}^{n}m_{j}\frac{q(t)-q_{j}(t)}{\left\Vert
q(t)-q_{j}(t)\right\Vert ^{\alpha+1}}.
\]
Replacing this in \eqref{body} and using \eqref{cambio_qj_m}, the equation of
motion is
\begin{multline}
\left(  \frac{\nu}{\omega}\partial_{\tau}+J\right)  ^{2}x(\tau)=-\frac
{x(\tau)}{\left\Vert x(\tau)\right\Vert ^{\alpha+1}}\label{eqmoon}\\
-\varepsilon^{\alpha}\sum_{j=2}^{n}m_{j}\left(  \frac{x_{1}(\tau)-x_{j}%
(\tau)+\varepsilon x(\tau)}{\left\Vert x_{1}(\tau)-x_{j}(\tau)+\varepsilon
x(\tau)\right\Vert ^{\alpha+1}}-\frac{x_{1}(\tau)-x_{j}(\tau)}{\left\Vert
x_{1}(\tau)-x_{j}(\tau)\right\Vert ^{\alpha+1}}\right)  .
\end{multline}

On the other hand, if $\mathcal{A}(x)=\int_{0}^{2\pi}A(\tau,x(\tau),x^{\prime
}(\tau))d\tau$, then
\begin{align*}
\dfrac{d}{d\tau}\dfrac{\partial A}{\partial x^{\prime}}=  &  \dfrac{\nu^{2}%
}{\omega^{2}}x^{\prime\prime}(\tau)+\dfrac{\nu}{\omega}Jx^{\prime}(\tau),\\
\dfrac{\partial A}{\partial x}=  &  -\dfrac{\nu}{\omega}Jx^{\prime}%
(\tau)+x(\tau)+\dfrac{x(\tau)}{\left\Vert x(\tau)\right\Vert ^{\alpha+1}}\\
&  -\varepsilon^{\alpha}\sum_{j=1}^{n}m_{j}\dfrac{x_{1}(\tau)-x_{j}%
(\tau)-\varepsilon x(\tau)}{\left\Vert x_{1}(\tau)-x_{j}(\tau)-\varepsilon
x(\tau)\right\Vert ^{\alpha+1}}-\dfrac{x_{1}(\tau)-x_{j}(\tau)}{\left\Vert
x(\tau)-\varepsilon x_{j}(\tau)\right\Vert ^{\alpha+1}}.
\end{align*}
Using the Euler-Lagrange equation, we get equation \eqref{eqmoon}. The case
$\alpha=1$ is similar.
\end{proof}

We look for moon solutions in which the amplitude $\varepsilon$ is very small.
At this point, we notice that the problem of the moon is different from the
problem of the comet because $\omega\rightarrow\infty$ if $\varepsilon
\rightarrow0$. But we need $\nu/\omega>\delta>0$, then $\nu\rightarrow\infty$
as $\varepsilon\rightarrow0$. But on the other hand, the period of $x_{j}%
(\tau)$ goes to infinity and $h(x,\tau)$ is not $2\pi$-periodic. Therefore, we
cannot define $\mathcal{A}$ in the space of $2\pi$-periodic paths. In order to
avoid this problem, we can consider the simpler case where the periodic
solution of the primaries is a relative equilibrium. That means that
$q_{j}(t)=e^{tJ}a_{j}$, where
\begin{equation}
a_{j}=\sum_{k=1(k\neq j)}^{n}m_{k}\dfrac{(a_{j}-a_{k})}{\Vert a_{j}-a_{k}%
\Vert^{\alpha+1}},\qquad m_{1}=1,\qquad a_{j}\in\mathbb{R}^{2}\text{.}
\label{centralconf}%
\end{equation}
In this case, we can choose values of $\omega$ and $\nu$ such that $h(x,\tau)$
is $2\pi$-periodic.

\begin{proposition}
If
\[
\dfrac{\omega-1}{\nu}=\mathfrak{p}\in\mathbb{Z}\text{,}%
\]
and $q_{j}(t)=e^{tJ}a_{j}$ with \eqref{centralconf}, then $h(x,\tau)$ defined
in the Proposition 3 is $2\pi$-periodic. Moreover, we have and $\nabla
_{x}h(x,\tau)=\mathcal{O}(\varepsilon^{\alpha+1})$ and $\nu/\omega=\left(
1-\varepsilon^{\frac{\alpha+1}{2}}\right)  /\mathfrak{p}.$
\end{proposition}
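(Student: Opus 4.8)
The plan is to exploit the rigidity of a relative equilibrium: in the rotating frame all inter-primary distances become constant, and the hypothesis $(\omega-1)/\nu=\mathfrak{p}\in\mathbb{Z}$ is exactly what synchronizes the frame with the primaries' period. I would begin by computing $x_j(\tau)$ explicitly. From $q_j(t)=e^{tJ}a_j$, $t=\tau/\nu$, and \eqref{cambio_qj_m},
\[
x_j(\tau)=e^{-J\omega\tau/\nu}\,q_j(\tau/\nu)=e^{-J\omega\tau/\nu}e^{J\tau/\nu}a_j=e^{-J(\omega-1)\tau/\nu}a_j=e^{-J\mathfrak{p}\tau}a_j ,
\]
where the last equality uses $(\omega-1)/\nu=\mathfrak{p}$. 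Two facts follow at once. (i) Since $\mathfrak{p}\in\mathbb{Z}$, each $x_j$ is $2\pi$-periodic; and since $h(x,\tau)$ depends on $\tau$ only through the differences $z_j(\tau):=x_1(\tau)-x_j(\tau)$, $j=2,\dots,n$, it is $2\pi$-periodic in $\tau$, which is what is required to define $\mathcal{A}$ on $H_{2\pi}^{1}$. (ii) $z_j(\tau)=e^{-J\mathfrak{p}\tau}(a_1-a_j)$ has constant length $\Vert z_j(\tau)\Vert=\Vert a_1-a_j\Vert=:d_j>0$, so the primary differences remain in a fixed compact set away from the origin, uniformly in $\tau$. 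Fact (ii) is the crux that makes all the perturbative estimates below uniform; it is precisely the property that fails for a general periodic $q_j(t)$, as noted before the statement.

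Next I would recast $h$ in a single $\alpha$-independent form. Using $\phi_1(\lambda)=-\log\lambda$ and $\nabla\phi_1(\Vert z\Vert)=-z/\Vert z\Vert^{2}$, the $\alpha=1$ expression of Proposition 3 equals $\sum_{j=2}^{n}m_j R_j(x,\tau)$ with
\[
R_j(x,\tau)=\phi_\alpha(\Vert z_j(\tau)+\varepsilon x\Vert)-\phi_\alpha(\Vert z_j(\tau)\Vert)-\varepsilon\,\nabla\phi_\alpha(\Vert z_j(\tau)\Vert)\cdot x ,
\]
while for $\alpha>1$ one has $h=\varepsilon^{\alpha-1}\sum_{j=2}^{n}m_j R_j+\mathrm{const}$, the additive constant $\varepsilon^{\alpha-1}\sum_j m_j\phi_\alpha(d_j)$ being irrelevant to $\nabla_x h$. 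Each $R_j$ is the second-order Taylor remainder of $y\mapsto\phi_\alpha(\Vert z_j+y\Vert)$ at $y=0$, evaluated at $y=\varepsilon x$; Taylor's theorem gives $R_j=\tfrac{\varepsilon^{2}}{2}\,x^{\top}\mathrm{Hess}[\phi_\alpha(\Vert\cdot\Vert)](z_j)\,x+\mathcal{O}(\varepsilon^{3})$, hence $\nabla_x R_j=\mathcal{O}(\varepsilon^{2})$, uniformly in $\tau$ because $\Vert z_j(\tau)\Vert\equiv d_j$ and $x$ ranges over a fixed bounded neighborhood of the circular orbit. Multiplying by $m_j\varepsilon^{\alpha-1}$ and summing yields $\nabla_x h(x,\tau)=\mathcal{O}(\varepsilon^{\alpha+1})$ (for $\alpha=1$ the prefactor is $\varepsilon^{0}=1$ and $\mathcal{O}(\varepsilon^{2})=\mathcal{O}(\varepsilon^{\alpha+1})$).

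Finally, the relation for $\nu/\omega$ is a one-line computation: from $\omega^{2}=\varepsilon^{-(\alpha+1)}$ we get $\omega=\varepsilon^{-(\alpha+1)/2}$, and $(\omega-1)/\nu=\mathfrak{p}$ gives
\[
\frac{\nu}{\omega}=\frac{\omega-1}{\mathfrak{p}\,\omega}=\frac{1-\omega^{-1}}{\mathfrak{p}}=\frac{1-\varepsilon^{(\alpha+1)/2}}{\mathfrak{p}} ,
\]
which tends to $1/\mathfrak{p}$ as $\varepsilon\to0$. I do not expect a genuine obstacle: once $x_j(\tau)$ is written out, periodicity is immediate, and the gradient bound reduces to bookkeeping the prefactor $\varepsilon^{\alpha-1}$ across the two cases $\alpha>1$ and $\alpha=1$ together with a uniform Taylor estimate. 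The only point needing a moment's care is that the remainder estimates be uniform in $\tau$, which is guaranteed precisely by the constancy of $\Vert a_1-a_j\Vert$ in a relative equilibrium.
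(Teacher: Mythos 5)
Your proposal is correct and follows essentially the same route as the paper: the key step in both is the computation $x_j(\tau)=e^{-J(\omega-1)\tau/\nu}a_j=e^{-J\mathfrak{p}\tau}a_j$, from which periodicity is immediate and the gradient bound follows from the Taylor-remainder structure of $h$ with the prefactor $\varepsilon^{\alpha-1}$. The paper's proof merely states these facts, whereas you supply the (correct) uniform Taylor estimate and the $\nu/\omega$ computation explicitly.
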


\begin{proof}
If $q_{j}(t)=e^{tJ}a_{j}$ and the configuration $a_{j}$ satisfies
\eqref{centralconf}, then
\[
x_{j}(\tau)=e^{-J(\omega-1)\tau/\nu}a_{j}.
\]
Therefore $x_{j}(\tau)$ and $h(x,\tau)$ are $2\pi$-periodic. Furthermore, from
the definition of $h$ we have $\nabla h(x,\tau)=\mathcal{O}(\varepsilon
^{\alpha+1}).$
\end{proof}

\section{Lyapunov-Schmidt reduction}

The Euler-Lagrange equations of $\mathcal{A}_{0}$ is
\[
\frac{\delta\mathcal{A}_{0}}{\delta x}(x;\varepsilon)=\left(  \frac{\nu
}{\omega}\partial_{\tau}+J\right)  ^{2}x+\frac{x}{\left\Vert x\right\Vert
^{\alpha+1}}.
\]
This equation has the circle of solutions $e^{J\theta}x_{0}$ for $\theta
_{0}\in\lbrack0,2\pi]$, where $x_{0}=(1,0)^{T}$,
\[
S^{1}=\left\{  e^{J\theta}x_{0}\in H_{2\pi}^{1}\left(  \mathbb{R}^{2}\right)
:\theta\in\lbrack0,2\pi]\right\}  .
\]
The question that we would like to answer is if the critical solutions of
$\mathcal{A}(x;0)=\mathcal{A}_{0}(x;0)$ persist for $\varepsilon\neq0$. For
this purpose we define a neighborhood $\Omega_{\rho}$ of $S^{1}$ where the
functional $\mathcal{A}$ is well defined,
\begin{equation}
\Omega_{\rho}:=\left\{  x\in H_{2\pi}^{1}\left(  \mathbb{R}^{2}\right)  :\Vert
x-e^{J\theta}x_{0}\Vert_{H^{1}}<\rho,\qquad\theta\in\lbrack0,2\pi]\right\}
\label{domain}%
\end{equation}
Thus our two problems are set as critical points of the action functional
$\mathcal{A}_{0}$ perturbed by $\mathcal{H}$,%
\begin{equation}
\mathcal{A}(x;\varepsilon)=\mathcal{A}_{0}(x;\varepsilon)+\mathcal{H}%
(x;\varepsilon):\Omega_{\rho}\times\mathbb{R}\rightarrow\mathbb{R}.
\label{A=A_0+H in coordinates (u_0,u)}%
\end{equation}

\begin{proposition}
Assume that $\alpha\geq1$, and $\omega$ and $\nu$ satisfy the conditions of
Propositions 2 and 4. Thus the action functional
\eqref{A=A_0+H in coordinates (u_0,u)} that gives solutions to the restricted
body problem is well-defined on $\Omega_{\rho}$ for $\rho$ small enough.
\end{proposition}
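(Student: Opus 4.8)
The plan is to show that the perturbation functional $\mathcal{H}$ — and in particular its gradient — is well-defined and smooth on the tube $\Omega_\rho$ once $\rho$ is chosen small enough, so that $\mathcal{A}=\mathcal{A}_0+\mathcal{H}$ makes sense there. Since $\mathcal{A}_0$ in \eqref{kepler} is manifestly well-defined and smooth on a neighborhood of $S^1$ in $H^1_{2\pi}$ (the integrand $\phi_\alpha(\|x\|)$ is smooth away from $x=0$, and every $x\in\Omega_\rho$ stays uniformly bounded away from the origin for $\rho$ small by the Sobolev embedding $H^1_{2\pi}\hookrightarrow C^0$), the entire issue is the potential term in $\mathcal{H}$, which involves singularities of the form $\|x-\varepsilon x_j\|^{-(\alpha-1)}$ (comet) or $\|x_1-x_j+\varepsilon x\|^{-(\alpha-1)}$ (moon).

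First I would record the uniform $C^0$ bound: there is a constant $C$ with $\|x(\tau)-e^{J\theta}x_0\|_{C^0}\le C\rho$ for all $x\in\Omega_\rho$, hence $\tfrac12\le\|x(\tau)\|\le 2$ pointwise once $C\rho<\tfrac12$. Next, for the comet case (Proposition 2), the singular set of $h(x,\tau)=\sum_j m_j[\phi_\alpha(\|x-\varepsilon x_j\|)-\phi_\alpha(\|x\|)]$ is where $x(\tau)=\varepsilon x_j(\tau)$; but $\|x_j(\tau)\|$ is bounded uniformly in $\tau$ (it is a fixed $2\pi$-periodic continuous function, being $e^{J\omega\tau/\nu}$ times a periodic orbit), so for $\varepsilon$ small enough $\|\varepsilon x_j(\tau)\|<\tfrac14$, and $\|x(\tau)-\varepsilon x_j(\tau)\|\ge\tfrac12-\tfrac14=\tfrac14>0$. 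Thus $h(\cdot,\tau)$ and $\nabla_x h(\cdot,\tau)$ are smooth functions of $(x,\tau)$ on $\Omega_\rho$, bounded along with all derivatives uniformly in $\tau$; composing with the continuous evaluation $x\mapsto x(\tau)$ and integrating over $\tau\in[0,2\pi]$ shows $\mathcal{H}(x)=\int_0^{2\pi}h(x(\tau),\tau)\,d\tau$ is a smooth functional on $\Omega_\rho$, with $\nabla\mathcal{H}$ given by the corresponding integral — this is a standard Nemytskii/superposition argument once the pointwise bounds are in hand. For the moon case (Propositions 3–4), the singularities in $h$ occur where $x_1(\tau)-x_j(\tau)+\varepsilon x(\tau)=0$; here one uses that the primaries are a genuine solution of the $n$-body problem (indeed a relative equilibrium in Proposition 4), so $\|x_1(\tau)-x_j(\tau)\|\ge d>0$ is bounded below by a constant $d$ (the minimal inter-body distance along the orbit), and again $\|\varepsilon x(\tau)\|\le 2\varepsilon<d/2$ for $\varepsilon$ small, giving $\|x_1-x_j+\varepsilon x\|\ge d/2>0$ pointwise; the same superposition argument then applies, and the Taylor-subtracted form of $h$ in Proposition 3 only improves matters (it is $\mathcal{O}(\varepsilon^{\alpha+1})$ in $C^1$ by Proposition 4).

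The only genuine subtlety — the step I expect to be the main obstacle to state cleanly — is making the "$\varepsilon$ small enough" and "$\rho$ small enough" quantifications compatible and uniform: the distance bounds require $\varepsilon$ below a threshold depending on $\rho$ (through the $C^0$ bound) and on the fixed geometry of the primary orbit, while in the applications $\varepsilon$ is tied to $\mathfrak p/\mathfrak q$ and tends to $0$. I would handle this by fixing $\rho$ first so that $C\rho<\tfrac14$, then observing that all the required inequalities on $\|\varepsilon x_j\|$ or $\|\varepsilon x\|$ hold for all sufficiently small $\varepsilon$, uniformly over $x\in\Omega_\rho$ — so the functional is well-defined on $\Omega_\rho\times(-\varepsilon_0,\varepsilon_0)$ for some $\varepsilon_0>0$. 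The smoothness in $\varepsilon$ as a parameter follows from the smooth dependence of the integrand on $\varepsilon$ on this set, together with dominated convergence to differentiate under the integral sign. I would then state the conclusion as: for $\rho$ small there is $\varepsilon_0>0$ such that $\mathcal{A}(\cdot;\varepsilon):\Omega_\rho\to\mathbb{R}$ is $C^\infty$ for $|\varepsilon|<\varepsilon_0$, with gradient $\nabla\mathcal{A}=\nabla\mathcal{A}_0+\nabla\mathcal{H}$, which is what the Lyapunov–Schmidt reduction in the next section requires.
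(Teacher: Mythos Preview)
Your proposal is correct and follows essentially the same route as the paper: use the Sobolev embedding $H^1_{2\pi}\hookrightarrow C^0_{2\pi}$ to get $\|x(\tau)-e^{J\theta}x_0\|_{C^0}\le C\rho$, conclude that for $\rho$ small the path $x$ lies in an annulus bounded away from the singularities of $h$, and hence $\mathcal{A}$ is well-defined and smooth on $\Omega_\rho$. The paper's proof is considerably terser (it simply says the neighborhood ``excludes collisions by construction''), whereas you spell out the explicit distance bounds in each case and the $\varepsilon$--$\rho$ compatibility, but the underlying argument is the same.
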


\begin{proof}
It is necessary only to see that $\mathcal{A}$ is well-defined in
$\Omega_{\rho}$ and that critical solutions in $\Omega_{\rho}$ are critical
solutions in $H_{2\pi}^{1}$. Since we look for critical points of the action
$\mathcal{A}(x;\varepsilon)$ in the space $H_{2\pi}^{1}$, then $\mathcal{A}$
is well-defined in $H_{2\pi}^{1}$ only if $\omega$ and $\nu$ satisfies the
conditions of Propositions 2 and 4.

Since $H_{2\pi}^{1}\subset C_{2\pi}^{0}$, there is a constant $C$ such that
$\Vert\cdot\Vert_{C^{0}}\leq C\Vert\cdot\Vert_{H^{1}}$. If $x\in\Omega_{\rho}%
$, then
\[
\max_{t}|x(t)-e^{J\theta}x_{0}|=\Vert x-e^{J\theta}x_{0}\Vert_{C^{0}}\leq
C\Vert x-e^{J\theta}x_{0}\Vert_{H^{1}}<C\rho\text{.}%
\]
Therefore, we choose $\rho$ small enough such that the path of $x$ belongs to
an annulus with center at the origin. The nonlinear integrals of
$\mathcal{A}(x)$ are bounded in the region where $h(x,t)$ is analytic, which
holds in the neighborhood $\Omega_{\rho}$ because it excludes collisions by construction.
\end{proof}

\subsection{Non-degeneracy condition}

In the moon and comet problems the functions $\nu=\nu(\varepsilon)$ and
$\omega=\omega(\varepsilon)$ depend on $\varepsilon$ and satisfy
\[
\lim_{\varepsilon\rightarrow0}\dfrac{\nu}{\omega}=\,{\dfrac{1}{\mathfrak{p}}}.
\]
Since $\mathcal{H}=\mathcal{O}(\varepsilon)$, the action at $\varepsilon=0$
is
\[
\mathcal{A}(x;0)=\mathcal{A}_{0}(x)=\int_{0}^{2\pi} \frac{1}{2}\left\Vert
\left(  \frac{1}{\mathfrak{p}}\partial_{\tau}+J\right)  x(\tau)\right\Vert ^{2}%
+\phi_{\alpha}\left(  \left\Vert x(\tau)\right\Vert \right)  ~d\tau\text{.}%
\]
In this section we make necessary estimates on the Hessian $\nabla
^{2}\mathcal{A}_{0}(x;0)$ in order to perform a Lyapunov--Schmidt reduction.

For $x\in H_{2\pi}^{1}$, the Fourier components of $x$ are
\[
x(t)=\sum_{l\in\mathbb{Z}}\hat{x}_{l}e^{il\tau}\text{,\qquad}\hat{x}%
_{l}=\overline{\hat{x}}_{-l}\in\mathbb{R}^{2}\text{.}%
\]
We define the components of $x$ as%
\[
\xi=\hat{x}_{0}=\int_{0}^{2\pi}x(\tau)~d\tau\in X_{0},\qquad\eta=\sum_{l\neq
0}\hat{x}_{l}e^{il\tau}=x(\tau)-\int_{0}^{2\pi}x(\tau)~d\tau\in X\text{.}%
\]
If we decompose $\xi$ in polar coordinates, $\xi=r\left(  \cos\theta
,\sin\theta\right)  $, we can write $x\in H_{2\pi}^{1}$ as a function of the
variables $(\theta,r,\eta)$, given by
\[
x(\theta,r,\eta)=r\binom{\cos\theta}{\sin\theta}+\eta,\qquad(\theta,r,\eta
)\in\lbrack0,2\pi]\times\mathbb{R}^{+}\times X.
\]
In the new coordinates $(\theta,r,\eta)$ the action is $\mathcal{A}_{0}%
(\theta,r,\eta)=\mathcal{A}(\theta,r,\eta;0)$. In these coordinates the
elements in the critical circle $S^{1}$ of $\mathcal{A}_{0}$ are
\[
x(\theta,1,0)=e^{J\theta}x_{0}\text{.}%
\]

\begin{lemma}
For $\alpha\geq1$ the action functional for the Kepler problem has the
expansion
\begin{equation}
\mathcal{A}_{0}(\theta,r+1,\eta)=\frac{1}{2}\displaystyle\int_{0}^{2\pi
}(\alpha+1)r^{2}+\left\Vert \left(  \frac{1}{\mathfrak{p}}\partial_{\tau}+J\right)
\eta\right\Vert ^{2}-\Vert\eta\Vert^{2}+(\alpha+1)\left(
\begin{pmatrix}
\cos\theta\\
\sin\theta
\end{pmatrix}
\cdot\eta\right)  ^{2}d\tau+\mathcal{O}(\left\vert (\theta,r,\eta)\right\vert
^{3}). \label{reduction}%
\end{equation}

\end{lemma}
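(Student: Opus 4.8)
The strategy is a direct Taylor expansion of the functional $\mathcal{A}_0$ around the critical circle, carried out term by term on the quadratic form $\frac12\|(\tfrac{1}{\mathfrak{p}}\partial_\tau+J)x\|^2$ and on the potential $\phi_\alpha(\|x\|)$. I would substitute $x(\tau)=x(\theta,r+1,\eta)=(r+1)(\cos\theta,\sin\theta)^T+\eta$ into \eqref{kepler} evaluated at $\nu/\omega=1/\mathfrak{p}$ and collect terms up to second order in $(\theta,r,\eta)$, with the overall goal of showing the first variation vanishes (so no linear terms survive) and identifying the Hessian.

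First I would handle the kinetic term. Write $u_\theta=(\cos\theta,\sin\theta)^T$, so $x=(r+1)u_\theta+\eta$. Since $Ju_\theta = u_\theta'$ (derivative in $\theta$) and $\partial_\tau$ acts only on $\eta$ among the three pieces when we think of $\theta,r$ as frozen base-point coordinates, the operator $(\tfrac{1}{\mathfrak{p}}\partial_\tau+J)$ applied to the constant vector $(r+1)u_\theta$ gives $(r+1)Ju_\theta$, which has norm $(r+1)^2$ after integrating; this contributes $\pi(r+1)^2$ plus the cross term $2(r+1)\int_0^{2\pi}\langle Ju_\theta,(\tfrac1{\mathfrak{p}}\partial_\tau+J)\eta\rangle\,d\tau$. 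The cross term is linear in $\eta$; I expect it to be cancelled by the corresponding linear-in-$\eta$ piece coming from the potential, consistent with $e^{J\theta}x_0$ being critical. The purely-$\eta$ piece is $\frac12\|(\tfrac1{\mathfrak{p}}\partial_\tau+J)\eta\|^2$, which already appears in \eqref{reduction}.

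Next the potential term: expand $\phi_\alpha(\|x\|)$ in powers of the deviation from the unit circle. Set $\rho(\tau)=\|x\|$. One computes $\|x\|^2=(r+1)^2+2(r+1)\langle u_\theta,\eta\rangle+\|\eta\|^2$, so $\rho = 1 + r + \langle u_\theta,\eta\rangle + \tfrac12(\|\eta\|^2-\langle u_\theta,\eta\rangle^2)+\cdots$. Using $\phi_\alpha'(1)=-1$ and $\phi_\alpha''(1)=\alpha$ (valid for all $\alpha\ge1$, including the $\log$ case where $\phi_1'(\lambda)=-1/\lambda$, $\phi_1''=1/\lambda^2$), the Taylor expansion $\phi_\alpha(\rho)=\phi_\alpha(1)-(\rho-1)+\tfrac{\alpha}{2}(\rho-1)^2+\cdots$ produces: a constant; a linear term $-(r+\langle u_\theta,\eta\rangle)$; and quadratic terms $-\tfrac12(\|\eta\|^2-\langle u_\theta,\eta\rangle^2)+\tfrac{\alpha}{2}(r+\langle u_\theta,\eta\rangle)^2$. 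After integrating over $\tau\in[0,2\pi]$: the $-\langle u_\theta,\eta\rangle$ term integrates against a constant and must cancel against the kinetic cross term (I would verify $\int_0^{2\pi}\langle Ju_\theta,(\tfrac1{\mathfrak{p}}\partial_\tau+J)\eta\rangle\,d\tau + \int_0^{2\pi}\langle u_\theta,\eta\rangle\,d\tau / (r+1)$-type bookkeeping gives zero at $(r,\eta)=(0,0)$; concretely $\int\langle Ju_\theta,J\eta\rangle = \int\langle u_\theta,\eta\rangle$ and $\int\langle Ju_\theta,\partial_\tau\eta\rangle=0$ since $u_\theta$ is $\tau$-independent, so the cross terms combine to $2(r+1)\int\langle u_\theta,\eta\rangle$, matching the $-2(r+1)\int\langle u_\theta,\eta\rangle$ expected from expanding $\int(r+1)\phi_\alpha'$—the linear parts cancel). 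The surviving quadratic pieces are $\tfrac{\alpha}{2}r^2$ from the kinetic side's $(r+1)^2$ expansion combined with $\tfrac{\alpha}{2}\int\langle u_\theta,\eta\rangle^2$... collecting all: $r^2$-coefficient $=\tfrac12(1)+\tfrac{\alpha}{2}=\tfrac{\alpha+1}{2}$; $\langle u_\theta,\eta\rangle^2$-coefficient $=\tfrac{\alpha}{2}+\tfrac12=\tfrac{\alpha+1}{2}$; $\|\eta\|^2$-coefficient $=-\tfrac12$; giving exactly \eqref{reduction} after factoring $\tfrac12$.

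The main obstacle is the careful bookkeeping of the linear-in-$\eta$ terms and confirming they cancel — this is the statement that $e^{J\theta}x_0$ is a critical point, which is true but must be checked honestly because the kinetic operator mixes the frozen $u_\theta$ direction with the $\tau$-dependent $\eta$, and because the expansion of $\phi_\alpha(\|x\|)$ in the vector variable $\eta$ (not just its radial projection) produces the $-\langle u_\theta,\eta\rangle^2$ contribution that is easy to drop. A secondary technical point is uniformity of the $\mathcal{O}(|(\theta,r,\eta)|^3)$ remainder in the $H^1$ norm: one needs the Sobolev embedding $H^1_{2\pi}\hookrightarrow C^0_{2\pi}$ (already invoked in the excerpt) so that $\|\eta\|_{C^0}$ is small, keeping $\rho$ bounded away from $0$ and the Taylor remainder of $\phi_\alpha$ controlled pointwise, then integrated. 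The $\theta$-dependence itself enters only through $u_\theta$ inside bounded integrals, so no genuine $\theta$-derivative terms appear at quadratic order, which is why $\theta$ is a flat direction — as it must be, being the $S^1$-symmetry direction.
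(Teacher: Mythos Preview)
Your approach is essentially the same Taylor-expansion strategy as the paper's, and your unified treatment via $\phi_\alpha'(1)=-1$, $\phi_\alpha''(1)=\alpha$ is in fact a bit cleaner than the paper's case split between $\alpha>1$ (binomial expansion of $(1+\cdot)^{(1-\alpha)/2}$) and $\alpha=1$ (Taylor expansion of $\log$).

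There is, however, one genuine gap in your bookkeeping. You focus on the cancellation of the \emph{purely linear}-in-$\eta$ contributions between kinetic and potential (and that cancellation does occur: both contribute $\pm\int_0^{2\pi}\langle u_\theta,\eta\rangle\,d\tau$). But you never address the \emph{quadratic cross term} in $r$ and $\eta$. The kinetic side produces $r\int_0^{2\pi}\langle u_\theta,\eta\rangle\,d\tau$ (from the $(r+1)$ factor you noticed), and the potential side produces $\alpha\, r\int_0^{2\pi}\langle u_\theta,\eta\rangle\,d\tau$ from $\tfrac{\alpha}{2}(\rho-1)^2$; these sum to $(\alpha+1)\,r\int_0^{2\pi}\langle u_\theta,\eta\rangle\,d\tau$ and do \emph{not} cancel. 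The reason this term is absent from \eqref{reduction} is not a cancellation but the defining property of the coordinate $\eta\in X$: it has zero mean, so
\[
\int_0^{2\pi}\langle u_\theta,\eta\rangle\,d\tau \;=\; u_\theta\cdot\int_0^{2\pi}\eta\,d\tau \;=\; 0.
\]
The paper invokes exactly this (``Since $\int_0^{2\pi}\eta\,d\tau=0$\ldots''), which kills every term containing $\int\langle u_\theta,\eta\rangle$ at once --- linear, cross, and the stray $(r+1)$ factors that were causing your ``$/(r+1)$-type bookkeeping'' worry. With this one observation in place, your coefficient computation for $r^2$, $\langle u_\theta,\eta\rangle^2$, and $\|\eta\|^2$ is correct and the proof goes through.
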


\begin{proof}
For $\alpha\neq1$ the functional action for the Kepler problem is
\[
\mathcal{A}_{0}(x)=\displaystyle\int_{0}^{2\pi}\frac{1}{2}\left\Vert \left(
\frac{1}{\mathfrak{p}}\partial_{\tau}+J\right)  x(\tau)\right\Vert ^{2}+\frac{1}%
{\alpha-1}\Vert x(\tau)\Vert^{1-\alpha}~d\tau.
\]
In order to compute the Hessian of the functional $\mathcal{A}$ around the
critical solution, in the coordinates $(\theta,r,\eta)$, we expand in Taylor
series the functional
\[
\mathcal{A}_{0}(\theta,r+1,\eta)=\displaystyle\int_{0}^{2\pi}\frac{1}%
{2}(r+1)^{2}+\frac{1}{2}\left\Vert \left(  \frac{1}{\mathfrak{p}}\partial_{\tau
}+J\right)  \eta\right\Vert ^{2}+\frac{1}{\alpha-1}\left(  \left\Vert (r+1)%
\begin{pmatrix}
\cos\theta\\
\sin\theta
\end{pmatrix}
+\eta\right\Vert ^{2}\right)  ^{\frac{1-\alpha}{2}}d\tau.
\]
We notice that
\[
\left(  \left\Vert (r+1)%
\begin{pmatrix}
\cos\theta\\
\sin\theta
\end{pmatrix}
+\eta\right\Vert ^{2}\right)  ^{\frac{1-\alpha}{2}}=\left(  1+2r+r^{2}+2(r+1)%
\begin{pmatrix}
\cos\theta\\
\sin\theta
\end{pmatrix}
\cdot\eta+\Vert\eta\Vert^{2}\right)  ^{\frac{1-\alpha}{2}}\text{.}%
\]
Using that
\[
(1+\rho)^{\frac{1-\alpha}{2}}=1-\frac{\alpha-1}{2}\rho+\frac{\alpha^{2}-1}%
{8}\rho^{2}+\mathcal{O}(\rho^{3}),
\]
we get the expansion%
\begin{align*}
\left(  \left\Vert (r+1)%
\begin{pmatrix}
\cos\theta\\
\sin\theta
\end{pmatrix}
+\eta\right\Vert ^{2}\right)  ^{\frac{1-\alpha}{2}}  &  =1-\frac{\alpha-1}%
{2}\left[  2r+r^{2}+2(r+1)%
\begin{pmatrix}
\cos\theta\\
\sin\theta
\end{pmatrix}
\cdot\eta+\Vert\eta\Vert^{2}\right] \\
&  ~~~+\frac{\alpha^{2}-1}{8}\left[  4r^{2}+4\left(
\begin{pmatrix}
\cos\theta\\
\sin\theta
\end{pmatrix}
\cdot\eta\right)  ^{2}\right]  +\mathcal{O}(\left\vert (\theta,r,\eta
)\right\vert ^{3}).
\end{align*}
Since $\int_{0}^{2\pi}\eta d\tau=0$, we obtain \eqref{reduction}.

For $\alpha=1$ the functional action is
\[
\mathcal{A}_{0}(x)=\displaystyle\int_{0}^{2\pi}\frac{1}{2}\left\Vert \left(
\frac{1}{\mathfrak{p}}\partial_{\tau}+J\right)  x(\tau)\right\Vert ^{2}-\log\left(  \Vert
x(\tau)\Vert\right)  ~d\tau.
\]
Therefore
\begin{align*}
\mathcal{A}_{0}(\theta,r+1,\eta)=\frac{1}{2}  &  \displaystyle\int_{0}^{2\pi
}(r+1)^{2}+\left\Vert \left(  \frac{1}{\mathfrak{p}}\partial_{\tau}+J\right)
\eta\right\Vert ^{2}\\
&  -\log\left(  1+2r+r^{2}+2(r+1)%
\begin{pmatrix}
\cos\theta\\
\sin\theta
\end{pmatrix}
\cdot\eta+\Vert\eta\Vert^{2}\right)  d\tau.
\end{align*}
Using that
\[
\log(1+\rho)=\rho-\frac{1}{2}\rho^{2}+\mathcal{O}(\rho^{3}),
\]
we get the expansion
\begin{align*}
\log\left(  1+2r+r^{2}+2(r+1)%
\begin{pmatrix}
\cos\theta\\
\sin\theta
\end{pmatrix}
\cdot\eta+\Vert\eta\Vert^{2}\right)   &  =r+\frac{1}{2}r^{2}+(r+1)%
\begin{pmatrix}
\cos\theta\\
\sin\theta
\end{pmatrix}
\cdot\eta+\frac{1}{2}\Vert\eta\Vert^{2}\\
&  ~~~-r^{2}-\left(
\begin{pmatrix}
\cos\theta\\
\sin\theta
\end{pmatrix}
\cdot\eta\right)  ^{2}+\mathcal{O}(\left\vert (\theta,r,\eta)\right\vert
^{3}).
\end{align*}
Using that $\int_{0}^{2\pi}\eta d\tau=0$, we obtain \eqref{reduction} with
$\alpha=1$.
\end{proof}

\begin{proposition}
\label{estimates} Assume that $\alpha\geq1$ ($\alpha\neq2$), then the Hessian
of the action functional $\mathcal{A}_{0}(\theta,r,\eta)$ at $(\theta,1,0)$
is
\[
\nabla^{2}\mathcal{A}_{0}(\theta,1,0)=%
\begin{pmatrix}
\nabla_{\theta}^{2}\mathcal{A}_{0} & 0\\
0 & \nabla_{(r,\eta)}^{2}\mathcal{A}_{0}%
\end{pmatrix}
,
\]
where
\[
\nabla_{\theta}^{2}\mathcal{A}_{0}(\theta,1,0)=0,\qquad\left\Vert \left(
\nabla_{(r,\eta)}^{2}\mathcal{A}_{0}(\theta,1,0)\right)  ^{-1}\right\Vert
_{H_{2\pi}^{1}\rightarrow H_{2\pi}^{1}}\leq C,
\]
and $C$ is a positive constant.
\end{proposition}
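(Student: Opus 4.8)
The plan is to read the Hessian directly off the expansion \eqref{reduction}, exploit the block structure it already displays, and reduce the remaining nontrivial block to a mode-by-mode computation in the Fourier basis.

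I would begin with the block structure. The Kepler action is invariant under the spatial rotation $x\mapsto e^{J\beta}x$, which in the coordinates $(\theta,r,\eta)$ reads $\mathcal{A}_{0}(\theta,r,\eta)=\mathcal{A}_{0}(\theta-\beta,r,e^{-J\beta}\eta)$; since $\eta\mapsto e^{-J\beta}\eta$ is an isometry of $H_{2\pi}^{1}$, it suffices to compute the Hessian at $\theta=0$, and the resulting bound on the inverse is automatically $\theta$-independent. By \eqref{reduction}, the purely quadratic part of $\mathcal{A}_{0}(\theta,r+1,\eta)$ at the origin is
\[
Q(\theta,r,\eta)=\frac{1}{2}\int_{0}^{2\pi}(\alpha+1)r^{2}+\left\Vert \left(\frac{1}{\mathfrak{p}}\partial_{\tau}+J\right)\eta\right\Vert ^{2}-\Vert\eta\Vert^{2}+(\alpha+1)(x_{0}\cdot\eta)^{2}\,d\tau,
\]
the $\theta$-dependence of the last term of \eqref{reduction} entering only at cubic order. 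Since $Q$ is $\theta$-independent and has no cross term involving $\theta$, we get $\nabla_{\theta}^{2}\mathcal{A}_{0}=0$ and the claimed block-diagonal splitting; and since $\int_{0}^{2\pi}\eta\,d\tau=0$ annihilates every term linear in $\eta$, there is also no $r$--$\eta$ coupling, so $\nabla_{(r,\eta)}^{2}\mathcal{A}_{0}=\mathrm{diag}(c_{\alpha},L)$, where $c_{\alpha}$ is a positive multiple of $\alpha+1$ (hence bounded below since $\alpha\geq1$) and $L$ is the self-adjoint operator of the $\eta$-form on $X$.

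Next I would analyze the block $L$. Integrating by parts, $L=-\tfrac{1}{\mathfrak{p}^{2}}\partial_{\tau}^{2}-\tfrac{2}{\mathfrak{p}}J\partial_{\tau}+\mathrm{diag}(\alpha+1,0)$, a constant-coefficient self-adjoint operator with compact resolvent, so its spectrum is real and discrete, accumulating only at $+\infty$ in modulus. In the Fourier basis, on the mode $e^{il\tau}\hat{\eta}_{l}$ (with $l\neq0$, $\hat{\eta}_{l}\in\mathbb{C}^{2}$, paired with its conjugate at $-l$) it acts by the Hermitian matrix
\[
M_{l}=\begin{pmatrix} s^{2}+\alpha+1 & -2is\\ 2is & s^{2}\end{pmatrix},\qquad s=l/\mathfrak{p},
\]
with $\mathrm{tr}\,M_{l}=2s^{2}+\alpha+1>0$, $\det M_{l}=s^{2}(s^{2}+\alpha-3)$, and smallest eigenvalue $\lambda_{\min}(M_{l})=s^{2}-2|s|+\mathcal{O}(1)$. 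Thus $M_{l}$ is nonsingular exactly when $l^{2}/\mathfrak{p}^{2}\neq 3-\alpha$, and $\lambda_{\min}(M_{l})$ is positive and tends to $+\infty$ once $|l|\gtrsim\mathfrak{p}$, so only finitely many modes need individual checking. The hypothesis $\alpha\neq2$ removes precisely the resonance $l=\pm\mathfrak{p}$ --- for $\alpha=2$ the epicyclic frequency equals the orbital one, reflecting the non-isolatedness of the circular Kepler orbit among the elliptic ones --- and this is where the hypothesis is genuinely used; granting it, $0\notin\mathrm{spec}(L)$.

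Finally, since $0\notin\mathrm{spec}(L)$ we have $\Vert L^{-1}\Vert_{L^{2}\to L^{2}}=1/\mathrm{dist}(0,\mathrm{spec}\,L)<\infty$, and because $L$ differs from $-\tfrac{1}{\mathfrak{p}^{2}}\partial_{\tau}^{2}$ by a bounded operator it gains two derivatives; concretely $\sup_{l\neq0}(1+l^{2})\Vert M_{l}^{-1}\Vert<\infty$ (the supremum over large $|l|$ being $\lesssim\mathfrak{p}^{2}$ and over the finitely many small $|l|$ finite by nonsingularity), which by Parseval gives $\Vert L^{-1}\Vert_{H_{2\pi}^{1}\to H_{2\pi}^{1}}\leq C$. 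Enlarging $C$ to dominate $c_{\alpha}^{-1}$ as well yields the stated estimate. The part I expect to be the real obstacle is twofold: first, the eigenvalue/determinant computation for $M_{l}$ together with the verification that the $\alpha=2$ resonance is the only obstruction --- this is precisely where the hypothesis enters and where the gravitational case genuinely fails, so it must be treated separately later; second, the (routine but necessary) passage from the mode-wise estimates to the $H^{1}\to H^{1}$ operator bound, which works only because $\Vert M_{l}^{-1}\Vert$ not merely stays bounded but decays like $l^{-2}$ as $|l|\to\infty$.
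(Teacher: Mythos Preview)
Your proposal is correct and follows essentially the same route as the paper's proof: read off the block structure from the expansion \eqref{reduction}, diagonalize the $\eta$-block in Fourier modes, identify the resonance $(l/\mathfrak{p})^{2}=3-\alpha$ that is excluded by $\alpha\neq2$, and deduce the uniform $H^{1}$ bound from the mode-wise estimates. The only cosmetic differences are that you reduce to $\theta=0$ by rotational invariance (the paper carries $\theta$ but its eigenvalues are $\theta$-independent anyway) and that you work with the unregularized matrices $M_{l}$, whereas the paper builds in the factor $(l^{2}+1)^{-1}$ to form $A_{l}=(l^{2}+1)^{-1}M_{l}$; your condition $\sup_{l\neq0}(1+l^{2})\Vert M_{l}^{-1}\Vert<\infty$ is exactly the paper's $\Vert A_{l}^{-1}\Vert\leq C_{0}$.
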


\begin{proof}
By a direct computation from \eqref{reduction}, the Hessian is block diagonal
\[
\nabla^{2}\mathcal{A}_{0}(\theta,1,0)=\nabla_{\theta}^{2}\mathcal{A}_{0}%
\oplus\nabla_{r}^{2}\mathcal{A}_{0}\oplus\nabla_{\eta}^{2}\mathcal{A}_{0},
\]
where $\nabla_{\theta}^{2}\mathcal{A}_{0}=0$, $\nabla_{r}^{2}\mathcal{A}%
_{0}=\alpha+1$ and
\[
\nabla_{\eta}^{2}\mathcal{A}_{0}=(-\partial_{\tau}^{2}+1)^{-1}\left[  -\left(
\frac{1}{\mathfrak{p}}\partial_{\tau}+J\right)  ^{2}-I+(\alpha+1)%
\begin{pmatrix}
\cos^{2}\theta & \cos\theta\sin\theta\\
\cos\theta\sin\theta & \sin^{2}\theta
\end{pmatrix}
\right]  .
\]
Since $\partial_{\tau}e^{il\tau}=ile^{il\tau}$, the Hessian $\nabla_{\eta}%
^{2}\mathcal{A}$ in Fourier components is
\[
\nabla_{\eta}^{2}\mathcal{A}_{0}\left(  \theta,1,0\right)  x=\sum
_{l\in\mathbb{Z}\backslash\{0\}}A_{l}\hat{x}_{l}e^{il\tau},
\]
where%
\[
A_{l}=\left(  l^{2}+1\right)  ^{-1}\left(
\begin{array}
[c]{cc}%
(l/\mathfrak{p})^{2}+(\alpha+1)\cos^{2}\theta & -2i(l/\mathfrak{p})+(\alpha+1)\sin\theta\cos\theta\\
2i(l/\mathfrak{p})+(\alpha+1)\sin\theta\cos\theta & (l/\mathfrak{p})^{2}+(\alpha+1)\sin^{2}\theta
\end{array}
\right)  .
\]
The matrix $A_{l}$ has eigenvalues%
\[
\lambda_{l}^{\pm}=\left(  l^{2}+1\right)  ^{-1}\left(  \frac{1}{2}%
(\alpha+1)+(l/\mathfrak{p})^{2}\pm\frac{1}{2}\sqrt{\left(  \alpha+1\right)
^{2}+16(l/\mathfrak{p})^{2}}\right)  \text{.}%
\]
The eigenvalues satisfy $\lambda_{l}^{\pm}\neq0$ if $l/\mathfrak{p}\neq\sqrt{3-\alpha}%
,0$. This condition holds because $\alpha\neq2$ and $l\neq0$ for the
coordinate $\eta$. Since $\lim_{l\rightarrow\infty}\lambda_{l}^{\pm}%
=(1/\mathfrak{p})^{2}$, there is a small positive constant $C_{0}$ such that $\left\vert
\lambda_{l}^{\pm}\right\vert \geq C_{0}^{-1}$ for all $l\neq0$, i.e.
$\left\vert A_{l}^{-1}\right\vert \leq C_{0}$ for $l\neq0$. Therefore, the
estimate for $\nabla_{\eta}^{2}\mathcal{A}_{0}\left(  \theta,1,0\right)  $
follows from the fact that%
\[
\left\Vert \left(  \nabla_{\eta}^{2}\mathcal{A}_{0}(\theta,1,0)\right)
^{-1}x\right\Vert _{H_{2\pi}^{1}}^{2}=\sum_{l\in\mathbb{Z}\backslash
\{0\}}\left(  l^{2}+1\right)  \left\vert A_{l}^{-1}\hat{x}_{l}\right\vert
^{2}\leq C_{0}^{2}\sum_{l\in\mathbb{Z}\backslash\{0\}}\left(  l^{2}+1\right)
\left\vert \hat{x}_{l}\right\vert ^{2}=C_{0}^{2}\left\Vert x\right\Vert
_{H_{2\pi}^{1}}^{2}.
\]
Thus, the inverse of $\nabla_{(r,\eta)}^{2}\mathcal{A}_{0}(\theta,1,0)$ is
bounded by
\begin{equation}
C=\max\left\{  C_{0},\frac{1}{\alpha+1}\right\}  \text{.}%
\end{equation}

\end{proof}

\begin{remark}
In the gravitational case $\alpha=2$, the condition in the proposition does
not hold because $A_{l}$ is not invertible for $l=\mathfrak{p}$. This fact is a
consequence of the existence of the elliptic orbits of the Kepler problem,
i.e. the circular orbits are not isolated.
\end{remark}

\subsection{Lyapunov-Schmidt reduction}

In this section we make a Lyapunov-Schmidt reduction to finite dimension for
the operator
\[
\nabla\mathcal{A}(x;\varepsilon):\Omega_{\rho}\times\Lambda_{\varepsilon
}\subset H_{2\pi}^{1}\times\mathbb{R}\rightarrow H_{2\pi}^{1},
\]
given by
\[
\nabla\mathcal{A}(x;\varepsilon)=\nabla\mathcal{A}_{0}(x;\varepsilon
)+\nabla\mathcal{H}(x;\varepsilon),\qquad\nabla\mathcal{H}(x;\varepsilon
)=\mathcal{O}_{H_{2\pi}^{1}}(\varepsilon^{2}).
\]
Since%
\[
\nabla\mathcal{A}(\theta,r,\eta;\varepsilon)=\left(  \nabla_{\theta
}\mathcal{A},\nabla_{(r,\eta)}\mathcal{A}\right)  ,
\]
the equation $\nabla\mathcal{A}=0$ is equivalent to solve $\nabla_{\theta
}\mathcal{A}=0$ and $\nabla_{(r,\eta)}\mathcal{A}=0$. The Lyapunov-Schmidt
reduction consist of solving from the equation $\nabla_{(r,\eta)}%
\mathcal{A}=0$ defined in the set $\Omega_{\rho}\times\Lambda_{\varepsilon
_{0}}$, the components $(r,\eta)$ as function of $(\theta;\varepsilon)$,
where
\[
\Lambda_{\varepsilon_{0}}=\{\varepsilon\in\mathbb{R}:0\leq\varepsilon
<\varepsilon_{0}\}.
\]

\begin{proposition}
\label{LS}There is a $\rho$ and $\varepsilon_{0}$ such that $\mathcal{A}%
(\theta,r,\eta;\varepsilon)$ has a critical point in $\Omega_{\rho}%
\times\Lambda_{\varepsilon_{0}}$ if and only if
\[
\Psi(\theta;\varepsilon)=\mathcal{A}(\theta,r(\theta;\varepsilon),\eta
(\theta;\varepsilon);\varepsilon):S^{1}\times\Lambda_{\varepsilon_{0}%
}\rightarrow\mathbb{R}%
\]
has a critical point in $S^{1}\times\Lambda_{\varepsilon_{0}}$, where
$r=r(\theta;\varepsilon)\ $and $\eta=\eta(\theta;\varepsilon)$ is the unique
solution of
\[
\nabla_{(r,\eta)}\mathcal{A}(\theta,r,\eta;\varepsilon)=0.
\]

\end{proposition}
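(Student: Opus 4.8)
The plan is a standard Lyapunov--Schmidt reduction built around the block splitting of the Hessian established in Proposition \ref{estimates}. First I would solve the ``auxiliary equation'' $\nabla_{(r,\eta)}\mathcal{A}=0$ for $(r,\eta)$ as a function of $(\theta;\varepsilon)$ by the implicit function theorem, uniformly in the group parameter $\theta\in S^{1}$, and then read off the stated equivalence of critical points from the chain rule.

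To set up the auxiliary equation, I would consider the map
\[
F:[0,2\pi]\times(\mathbb{R}\times X)\times\Lambda_{\varepsilon_{0}}\to\mathbb{R}\times X,\qquad F(\theta,(r,\eta);\varepsilon)=\nabla_{(r,\eta)}\mathcal{A}(\theta,r,\eta;\varepsilon),
\]
where $\mathbb{R}\times X$ carries the norm inherited from $H_{2\pi}^{1}$. By the previous proposition, $\mathcal{A}$ is well defined and analytic on the collision-free neighborhood $\Omega_{\rho}\times\Lambda_{\varepsilon_{0}}$, so $F$ is smooth there. Since $S^{1}$ consists of critical points of $\mathcal{A}_{0}$ and $\nabla\mathcal{H}=\mathcal{O}_{H_{2\pi}^{1}}(\varepsilon^{2})$, one has $F(\theta,(1,0);0)=\nabla_{(r,\eta)}\mathcal{A}_{0}(\theta,1,0)=0$ for every $\theta$, while $D_{(r,\eta)}F(\theta,(1,0);0)=\nabla_{(r,\eta)}^{2}\mathcal{A}_{0}(\theta,1,0)$ is invertible with inverse bounded by the constant $C$ of Proposition \ref{estimates}, and crucially that bound is independent of $\theta$. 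Then I would invoke a quantitative implicit function theorem: using the compactness of $S^{1}$, the $\theta$-uniform bound on the inverse of the partial Hessian, the continuity of $D_{(r,\eta)}F$, and the estimate $\|F(\theta,(1,0);\varepsilon)\|=\mathcal{O}(\varepsilon^{2})$, one obtains $\rho>0$ and $\varepsilon_{0}>0$ (the same for all $\theta$) and a unique smooth map $(\theta;\varepsilon)\mapsto(r(\theta;\varepsilon),\eta(\theta;\varepsilon))$, with $\|(r-1,\eta)\|_{H^{1}}<\rho$, solving $\nabla_{(r,\eta)}\mathcal{A}(\theta,r,\eta;\varepsilon)=0$ and reducing to $(r,\eta)=(1,0)$ at $\varepsilon=0$. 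Shrinking $\rho$ if necessary so that the $(r,\eta)$-directions of $\Omega_{\rho}$ are exactly swept out by this graph, uniqueness forces every critical point of $\mathcal{A}(\cdot;\varepsilon)$ in $\Omega_{\rho}$ to lie on it.

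For the equivalence of critical points I would set $\Psi(\theta;\varepsilon)=\mathcal{A}(\theta,r(\theta;\varepsilon),\eta(\theta;\varepsilon);\varepsilon)$ and differentiate, obtaining
\[
\nabla_{\theta}\Psi(\theta;\varepsilon)=\nabla_{\theta}\mathcal{A}+\nabla_{(r,\eta)}\mathcal{A}\cdot\partial_{\theta}(r,\eta)=\nabla_{\theta}\mathcal{A}(\theta,r(\theta;\varepsilon),\eta(\theta;\varepsilon);\varepsilon),
\]
because the $(r,\eta)$-gradient vanishes identically along the graph. Hence, if $\theta^{*}$ is a critical point of $\Psi(\cdot;\varepsilon)$, then $\nabla_{\theta}\mathcal{A}=0$ and $\nabla_{(r,\eta)}\mathcal{A}=0$ at $(\theta^{*},r(\theta^{*};\varepsilon),\eta(\theta^{*};\varepsilon))$, so that point is a critical point of $\mathcal{A}$; conversely, a critical point of $\mathcal{A}$ in $\Omega_{\rho}$ satisfies $\nabla_{(r,\eta)}\mathcal{A}=0$, hence lies on the graph by the previous step, and its $\theta$-component annihilates $\nabla_{\theta}\Psi$.

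The hard part is the \emph{uniformity in} $\theta$ of the implicit function theorem, namely that $\rho$ and $\varepsilon_{0}$ may be chosen the same for all $\theta$ on the critical circle. This is exactly the role of Proposition \ref{estimates}, whose bound $C$ is $\theta$-independent; combined with the compactness of $S^{1}$ and the $\mathcal{O}(\varepsilon^{2})$ smallness of $\nabla\mathcal{H}$, it provides the quantitative control a uniform inverse-function argument requires. A secondary, more routine issue is keeping the whole construction inside the region where $\mathcal{A}$ (and thus $F$) is analytic, i.e. away from collisions, which is absorbed into the final choice of $\rho$.
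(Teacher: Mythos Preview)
Your proposal is correct and follows essentially the same Lyapunov--Schmidt route as the paper: invertibility of $\nabla_{(r,\eta)}^{2}\mathcal{A}_{0}$ from Proposition~\ref{estimates}, the implicit function theorem to solve for $(r,\eta)$, and compactness of $S^{1}$ to achieve uniformity in $\theta$. If anything, your write-up is more complete than the paper's, since you spell out the chain-rule computation $\nabla_{\theta}\Psi=\nabla_{\theta}\mathcal{A}$ on the graph and thereby argue both directions of the ``if and only if,'' whereas the paper leaves this implicit.
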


\begin{proof}
We have that $\nabla\mathcal{A}(\vartheta,1,0;0)=0$ for any $\vartheta\in
S^{1}$. By Proposition \ref{estimates}, the operator $\nabla_{(r,\eta)}%
^{2}\mathcal{A}(\vartheta,1,0;0)$ is invertible. Then, by the Implicit
Function Theorem, we can solve $r_{\vartheta}=r_{\vartheta}(\theta
;\varepsilon)$ and $\eta_{\vartheta}=\eta_{\vartheta}(\theta;\varepsilon)$ as
the unique solution of $\nabla_{(r,\eta)}\mathcal{A}(\theta,r,\eta
;\varepsilon)=0$ in $\mathcal{V}\times\Lambda_{\varepsilon_{\vartheta}}$,
where $\mathcal{V}$ is a neighborhood of $\vartheta$. Note that we have a
local solution around each point of $\vartheta\in S^{1}$. Using the
compactness of $S^{1}$, and the uniqueness of the solutions $r_{\vartheta
}(\theta;\varepsilon)$ and $\eta_{\vartheta}(\theta;\varepsilon)$ for
$\theta\in\mathcal{V}$ and $\varepsilon<\varepsilon_{\vartheta}$, we can
construct functions
\[
r(\theta;\varepsilon):S^{1}\times\Lambda_{\varepsilon}\rightarrow
\mathbb{R}^{+},\qquad\eta(\theta;\varepsilon):S^{1}\times\Lambda_{\varepsilon
}\rightarrow X,
\]
that solve $\nabla_{(r,\eta)}\mathcal{A}(\theta,r,\eta;\varepsilon)=0$ for
$\varepsilon<\varepsilon_{0}=\min_{\vartheta\in S^{1}}\varepsilon_{\vartheta}%
$. Therefore, critical solutions of $\mathcal{A}$ are critical solutions of
$\Psi$.
\end{proof}

\section{Comet and moon solutions}

In Proposition \ref{estimates} we assume that $\alpha\neq2$, so we analyze two
cases separately.

\subsection{Main theorems for $\alpha\geq1$ ($\alpha\neq2$)}

\begin{theorem}
\label{main}Assume that $\alpha\geq1$ ($\alpha\neq2$) and $\nabla
\mathcal{H=O}_{H_{2\pi}^{1}}(\varepsilon^{2})$, then there is a $\varepsilon
_{0}$ such that for $\varepsilon<\varepsilon_{0}$, the functional
$\mathcal{A}(x;\varepsilon)=\mathcal{A}_{0}+\mathcal{H}$ has at least two
critical solutions of the form
\[
x(\tau)=e^{J\theta_{j}}x_{0}+\mathcal{O}_{H_{2\pi}^{1}}(\varepsilon^{2})~,
\]
where $x_{0}=(1,0)$ and $\mathcal{O}_{H_{2\pi}^{1}}(\varepsilon^{2})$ is a
$2\pi$-periodic function of order $\varepsilon^{2}$.
\end{theorem}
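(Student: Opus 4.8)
The plan is to combine the Lyapunov–Schmidt reduction of Proposition~\ref{LS} with a variational argument on the reduced functional $\Psi(\theta;\varepsilon)$ defined on the compact manifold $S^{1}$. First I would invoke Proposition~\ref{LS} to obtain, for $\rho$ and $\varepsilon_{0}$ small enough, smooth functions $r(\theta;\varepsilon)$ and $\eta(\theta;\varepsilon)$ solving $\nabla_{(r,\eta)}\mathcal{A}(\theta,r,\eta;\varepsilon)=0$, so that critical points of $\mathcal{A}$ in $\Omega_{\rho}\times\Lambda_{\varepsilon_{0}}$ correspond exactly to critical points of $\Psi(\theta;\varepsilon)=\mathcal{A}(\theta,r(\theta;\varepsilon),\eta(\theta;\varepsilon);\varepsilon)$ on $S^{1}$. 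The point is that $\Psi(\,\cdot\,;\varepsilon)$ is a $C^{1}$ function on the compact manifold $S^{1}$ (which has no boundary), so it attains both a maximum and a minimum; each is a critical point, and if the two values coincide the function is constant and every point is critical, so in all cases we obtain at least two critical values of $\theta$ (counted appropriately). This yields at least two critical solutions of $\mathcal{A}$.

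Next I would control the size and location of these solutions. At $\varepsilon=0$ we have $\Psi(\theta;0)=\mathcal{A}_{0}(\theta,1,0)=\mathcal{A}_{0}(e^{J\theta}x_{0})$, which is constant in $\theta$ because $\mathcal{A}_{0}$ is $S^{1}$-invariant; moreover $r(\theta;0)\equiv 1$ and $\eta(\theta;0)\equiv 0$ since $(\theta,1,0)$ already solves $\nabla_{(r,\eta)}\mathcal{A}_{0}=0$ and the solution is unique by the Implicit Function Theorem (using the invertibility of $\nabla^{2}_{(r,\eta)}\mathcal{A}_{0}(\theta,1,0)$ from Proposition~\ref{estimates}). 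Since $\nabla\mathcal{H}=\mathcal{O}_{H^{1}_{2\pi}}(\varepsilon^{2})$, the implicit function solution satisfies $r(\theta;\varepsilon)=1+\mathcal{O}(\varepsilon^{2})$ and $\eta(\theta;\varepsilon)=\mathcal{O}_{H^{1}_{2\pi}}(\varepsilon^{2})$, with the bound uniform in $\theta\in S^{1}$ by the compactness argument in the proof of Proposition~\ref{LS}. Hence any critical point $x$ of $\mathcal{A}$ produced this way has the form
\[
x(\tau)=r(\theta_{j};\varepsilon)e^{J\theta_{j}}x_{0}+\eta(\theta_{j};\varepsilon)=e^{J\theta_{j}}x_{0}+\mathcal{O}_{H^{1}_{2\pi}}(\varepsilon^{2}),
\]
which is the asserted form.

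For the estimate $r-1=\mathcal{O}(\varepsilon^{2})$ and $\eta=\mathcal{O}(\varepsilon^{2})$ I would differentiate the defining relation: writing $F(r,\eta,\theta;\varepsilon)=\nabla_{(r,\eta)}\mathcal{A}_{0}(\theta,r,\eta;\varepsilon)+\nabla_{(r,\eta)}\mathcal{H}(\theta,r,\eta;\varepsilon)=0$, we have $F(1,0,\theta;0)=0$, and since $\partial_{(r,\eta)}F$ at $(1,0,\theta;0)$ equals $\nabla^{2}_{(r,\eta)}\mathcal{A}_{0}(\theta,1,0)$, which is boundedly invertible with norm of the inverse $\le C$ uniformly in $\theta$ by Proposition~\ref{estimates}, the implicit function gives $\|(r-1,\eta)\|\le C'\|\nabla_{(r,\eta)}\mathcal{H}\|=\mathcal{O}(\varepsilon^{2})$ for $\varepsilon<\varepsilon_{0}$ small. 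The main obstacle is not any single computation but making the neighbourhood sizes $\rho$ and $\varepsilon_{0}$ genuinely uniform over $S^{1}$: one must check that the Implicit Function Theorem applies with constants independent of $\theta$ (this uses the $\theta$-uniform bound on $\|(\nabla^{2}_{(r,\eta)}\mathcal{A}_{0})^{-1}\|$ from Proposition~\ref{estimates}, together with $C^{2}$-bounds on $\mathcal{H}$ on $\Omega_{\rho}$ that are uniform in $\theta$ and $\varepsilon$), and that the resulting $r(\theta;\varepsilon)$, $\eta(\theta;\varepsilon)$ are defined on the full circle rather than only locally. Once that is in place, the compactness of $S^{1}$ does the rest: $\Psi$ has at least two critical points, completing the proof.
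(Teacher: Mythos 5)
Your proposal is correct and follows essentially the same route as the paper: Lyapunov--Schmidt reduction via Proposition~\ref{LS}, the maximum and minimum of $\Psi(\cdot;\varepsilon)$ on the compact circle $S^{1}$ giving the two critical points, and the bound $\|(\nabla^{2}_{(r,\eta)}\mathcal{A}_{0})^{-1}\|\le C$ from Proposition~\ref{estimates} combined with $\nabla\mathcal{H}=\mathcal{O}_{H^{1}_{2\pi}}(\varepsilon^{2})$ to get $r-1=\mathcal{O}(\varepsilon^{2})$, $\eta=\mathcal{O}_{H^{1}_{2\pi}}(\varepsilon^{2})$. You in fact spell out more than the paper does, which simply cites these as ``standard estimates'' and does not address the degenerate case where the maximum and minimum of $\Psi$ coincide.
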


\begin{proof}
Since $\nabla\mathcal{H=O}_{H_{2\pi}^{1}}(\varepsilon^{2})$, by standard
estimates in Lyapunov-Schmidt reduction (for instance \cite{FoGa}) we have
that $r(\theta;\varepsilon)=\mathcal{O}(\varepsilon^{2})$ and $\eta
(\theta;\varepsilon)=\mathcal{O}_{H_{2\pi}^{1}}(\varepsilon^{2})$ in
Proposition \ref{LS}. By the compactness of $S^{1}$, we conclude that $\Psi$
has at least $2$ critical points, one maximum $\theta_{1}$ and one minimum
$\theta_{2}$. Since $r(\theta;\varepsilon)=\mathcal{O}(\varepsilon^{2})$ and
$\eta(\theta;\varepsilon)=\mathcal{O}_{H_{2\pi}^{1}}(\varepsilon^{2})$, then
$\mathcal{A}(x)$ has at least two critical solutions of the form
$x(\tau)=e^{J\theta_{j}}x_{0}+\mathcal{O}_{H_{2\pi}^{1}}(\varepsilon^{2})$.
\end{proof}

In the case $\alpha\neq2$, we obtain the existence of comet solutions as an
immediate consequence of Theorem \ref{main}.

\begin{theorem}
\label{comet}Assume that $\alpha\geq1$ ($\alpha\neq2$), $\sum_{j=1}^{n}%
m_{j}=1$, $\omega=\mathfrak{p}/\mathfrak{q}$ and $\nu=1/\mathfrak{q}$. For each integer $\mathfrak{p}$ there is an integer
$\mathfrak{q}_{0}$ such that for each integer $\mathfrak{q}>\mathfrak{q}_{0}$, the restricted $(n+1)$-body
problem has at least two $2\pi \mathfrak{q}$-periodic solutions of the form
\[
q(t)=\varepsilon^{-1}e^{J\left(  \theta_{j}+\mathfrak{p}t/\mathfrak{q}\right)  }x_{0}+\mathcal{O}%
(\varepsilon),\qquad\varepsilon=\left(  \mathfrak{p}/\mathfrak{q}\right)  ^{2/(\alpha+1)}.
\]

\end{theorem}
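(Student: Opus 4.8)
The plan is to verify that Theorem~\ref{comet} is an immediate corollary of Theorem~\ref{main} once the hypotheses are matched up. First I would note that with $\sum_j m_j=1$, the rescaling $q(t)=\varepsilon^{-1}e^{J\omega\tau/\nu}x(\tau)$ of Proposition~1 writes the solutions of \eqref{body} as critical points of $\mathcal{A}=\mathcal{A}_0+\mathcal{H}$. By Proposition~2, the choice $\omega=\mathfrak{p}/\mathfrak{q}$, $\nu=1/\mathfrak{q}$ makes $h(x,\tau)$ genuinely $2\pi$-periodic (so $\mathcal{A}$ is defined on $H^1_{2\pi}$) and yields $\nabla_x h(x,\tau)=\mathcal{O}(\varepsilon^2)$; integrating over $\tau$ this gives $\nabla\mathcal{H}=\mathcal{O}_{H^1_{2\pi}}(\varepsilon^2)$, which is exactly the hypothesis of Theorem~\ref{main}. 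One has $\nu/\omega=1/\mathfrak{p}$ here, so the limiting functional $\mathcal{A}(x;0)=\mathcal{A}_0$ is precisely the one analyzed in Proposition~\ref{estimates}, and since $\alpha\neq2$ its Hessian transverse to the critical circle is invertible, so Proposition~\ref{LS} and Theorem~\ref{main} apply.

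Next I would make sure the parameter regime is consistent: the comet problem requires $\varepsilon\to0$, which by $\varepsilon=(\mathfrak{p}/\mathfrak{q})^{2/(\alpha+1)}$ means $\mathfrak{q}\to\infty$ with $\mathfrak{p}$ fixed. Theorem~\ref{main} supplies a threshold $\varepsilon_0$; choosing $\mathfrak{q}_0$ large enough that $(\mathfrak{p}/\mathfrak{q})^{2/(\alpha+1)}<\varepsilon_0$ for all $\mathfrak{q}>\mathfrak{q}_0$ produces, for each such $\mathfrak{q}$, at least two critical points $\theta_1$ (a maximum of $\Psi$) and $\theta_2$ (a minimum of $\Psi$) of $\mathcal{A}$, of the form $x(\tau)=e^{J\theta_j}x_0+\mathcal{O}_{H^1_{2\pi}}(\varepsilon^2)$.

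Finally I would undo the change of variables. From $q(t)=\varepsilon^{-1}e^{J\omega\tau/\nu}x(\tau)$ with $t=\tau/\nu$, i.e. $\tau=\nu t=t/\mathfrak{q}$ and $\omega\tau/\nu=\omega t=(\mathfrak{p}/\mathfrak{q})t$, substituting the critical solution gives
\[
q(t)=\varepsilon^{-1}e^{J(\mathfrak{p}/\mathfrak{q})t}\left(e^{J\theta_j}x_0+\mathcal{O}_{H^1_{2\pi}}(\varepsilon^2)\right)=\varepsilon^{-1}e^{J(\theta_j+\mathfrak{p}t/\mathfrak{q})}x_0+\mathcal{O}(\varepsilon),
\]
since the rotation matrix has unit norm and the $\varepsilon^{-1}$ prefactor turns the $\mathcal{O}(\varepsilon^2)$ remainder into $\mathcal{O}(\varepsilon)$. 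A $2\pi$-periodic function of $\tau$ is, under $\tau=t/\mathfrak{q}$, a $2\pi\mathfrak{q}$-periodic function of $t$, so $q(t)$ is $2\pi\mathfrak{q}$-periodic, giving the stated solutions. I do not anticipate a serious obstacle here: the only point needing a little care is the bookkeeping of the $\varepsilon$ powers (the $\mathcal{O}(\varepsilon^2)$ in $H^1_{2\pi}$ becomes $\mathcal{O}(\varepsilon)$ after multiplication by $\varepsilon^{-1}$, and $H^1_{2\pi}\hookrightarrow C^0_{2\pi}$ so this is a genuine pointwise $\mathcal{O}(\varepsilon)$ bound) and checking that the periods and frequencies transform correctly under the two-scale substitution.
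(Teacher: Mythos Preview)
Your proposal is correct and follows essentially the same approach as the paper: verify via Propositions~1 and~2 that the hypotheses of Theorem~\ref{main} are met, pick $\mathfrak{q}_0$ so that $\varepsilon=(\mathfrak{p}/\mathfrak{q})^{2/(\alpha+1)}<\varepsilon_0$ for $\mathfrak{q}>\mathfrak{q}_0$, and then undo the change of variables~\eqref{cambio_q_c} to read off the form and the $2\pi\mathfrak{q}$-periodicity of $q(t)$. The only difference is that you spell out in more detail the bookkeeping (the $H^1\hookrightarrow C^0$ embedding and the $\varepsilon^{-1}\cdot\mathcal{O}(\varepsilon^2)=\mathcal{O}(\varepsilon)$ step) that the paper leaves implicit.
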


\begin{proof}
For $\mathfrak{p}$ fixed, there is a $\mathfrak{q}_{0}\in\mathbb{Z}$ such that $\varepsilon=\left(
\mathfrak{p}/\mathfrak{q}\right)  ^{2/(\alpha+1)}<\varepsilon_{0}$ if $\mathfrak{q}>\mathfrak{q}_{0}$. From the changes of
variables \eqref{cambio_q_c} and Theorem \ref{main}, after the rescaling
$t=\tau/\nu$, for $\varepsilon<\varepsilon_{0}$ the restricted $(n+1)$-body
problem has at least two comet solutions of the form
\[
q(t)=\varepsilon^{-1}e^{J\mathfrak{p}t/\mathfrak{q}}x(\tau)=\varepsilon^{-1}e^{J\mathfrak{p}t/\mathfrak{q}}\left(
e^{J\theta_{j}}x_{0}+\mathcal{O}(\varepsilon^{2})\right)  ,
\]
where the functions $e^{J\mathfrak{p}t/\mathfrak{q}}$ and $\mathcal{O}(\varepsilon^{2})$ are $2\pi
\mathfrak{q}$-periodic.
\end{proof}

The comet solution $q(t)$ is $2\pi \mathfrak{q}$-periodic, then in one period, the $n$
primary bodies travel their $2\pi$-periodic orbit $\mathfrak{q}$ times, while the comet
winds around the origin $\mathfrak{p}$ times.

In the case of moon solutions we only need to assume that $\omega=\mathfrak{p}\nu+1$ and
$\omega^{2}=\varepsilon^{-\left(  \alpha+1\right)  /2}$, i.e. the frequency
$\omega$ can take a continuum of values and the solutions that we obtain can
be quasiperiodic,

\begin{theorem}
Assume that $\alpha\geq1$ ($\alpha\neq2$), $q_{j}(t)=e^{Jt}a_{j}$ with
\eqref{centralconf}, and $\nu=\left(  \omega-1\right)  /\mathfrak{p}$. From the changes
of variables \eqref{cambio_q_m} and Theorem \ref{main}, for $\varepsilon
<\varepsilon_{0}$ we obtain the existence of at least two moon solutions
$q(t)\ $of the form
\[
q(t)=q_{1}(t)+\varepsilon e^{J\left(  \theta_{j}+\omega t\right)  }%
x_{0}+\mathcal{O}(\varepsilon^{3}),\qquad\varepsilon=\omega^{-2/\left(
\alpha+1\right)  },
\]
where $\mathcal{O}(\varepsilon^{3})$ is a (periodic) quasiperiodic function of
order $\varepsilon^{3}$.
\end{theorem}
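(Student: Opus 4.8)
The plan is to read this off Theorem \ref{main} together with the change of variables \eqref{cambio_q_m}; the only real content is checking that the hypotheses of Theorem \ref{main} hold and then transporting the conclusion back to the original coordinates. First I would verify the hypotheses. Since $q_{j}(t)=e^{Jt}a_{j}$ with \eqref{centralconf} and $\nu=(\omega-1)/\mathfrak{p}$ — that is, $(\omega-1)/\nu=\mathfrak{p}\in\mathbb{Z}$ — Proposition 4 applies and gives that the rescaled primaries $x_{j}(\tau)=e^{-J(\omega-1)\tau/\nu}a_{j}$, and hence $h(x,\tau)$, are $2\pi$-periodic, that $\nabla_{x}h(x,\tau)=\mathcal{O}(\varepsilon^{\alpha+1})$, and that $\nu/\omega=(1-\varepsilon^{(\alpha+1)/2})/\mathfrak{p}$. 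Because $\alpha\geq1$ we have $\alpha+1\geq2$, and since $h$ is analytic on $\Omega_{\rho}$ away from collisions (Proposition 5) this upgrades to $\nabla\mathcal{H}=\mathcal{O}_{H_{2\pi}^{1}}(\varepsilon^{2})$; because $\nu/\omega\to1/\mathfrak{p}$ as $\varepsilon\to0$, the unperturbed functional is exactly $\mathcal{A}(x;0)=\mathcal{A}_{0}(x)$ with frequency $1/\mathfrak{p}$, so the non-degeneracy of $\nabla_{(r,\eta)}^{2}\mathcal{A}_{0}$ from Proposition \ref{estimates} is available (here we use $\alpha\neq2$), and Proposition 5 makes $\mathcal{A}(x;\varepsilon)$ well defined on $\Omega_{\rho}$ for small $\rho$.

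With the hypotheses in place I would apply Theorem \ref{main}: there is $\varepsilon_{0}>0$ such that whenever $\varepsilon=\omega^{-2/(\alpha+1)}<\varepsilon_{0}$ (equivalently, $\omega$ large) the functional $\mathcal{A}(x;\varepsilon)$ has at least two critical points $x(\tau)=e^{J\theta_{j}}x_{0}+\mathcal{O}_{H_{2\pi}^{1}}(\varepsilon^{2})$, $j=1,2$, which are $2\pi$-periodic in $H_{2\pi}^{1}$ and, by Proposition 3, are solutions of the equation of motion \eqref{eqmoon}; undoing \eqref{cambio_q_m} they give solutions of the restricted problem \eqref{body}. Concretely, with $t=\tau/\nu$ so that $\omega\tau/\nu=\omega t$,
\[
q(t)=q_{1}(t)+\varepsilon e^{J\omega t}x(\nu t)=q_{1}(t)+\varepsilon e^{J(\theta_{j}+\omega t)}x_{0}+\mathcal{O}(\varepsilon^{3}),
\]
where I use the Sobolev embedding $H_{2\pi}^{1}\hookrightarrow C_{2\pi}^{0}$ to pass from the $H^{1}$ estimate to a pointwise one, and the fact that $e^{J\omega t}\in SO(2)$, so that multiplying the $\mathcal{O}(\varepsilon^{2})$ remainder by $\varepsilon e^{J\omega t}$ leaves it of order $\varepsilon^{3}$.

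Finally I would record the (quasi)periodicity. The profile $x(\tau)$ is $2\pi$-periodic in $\tau$, so $t\mapsto x(\nu t)$ is $2\pi/\nu$-periodic, while $q_{1}(t)=e^{Jt}a_{1}$ is $2\pi$-periodic and $e^{J\omega t}$ is $2\pi/\omega$-periodic; since $\nu=(\omega-1)/\mathfrak{p}$, the frequencies $1$, $\omega$, $\nu$ are pairwise commensurable exactly when $\omega\in\mathbb{Q}$, so $q(t)$ — and the remainder $\mathcal{O}(\varepsilon^{3})$ with it — is periodic for rational $\omega$ and quasiperiodic otherwise. I do not expect a genuine obstacle here, since the substance is carried by Theorem \ref{main} and Propositions 3--5 and \ref{estimates}; the one point that needs a little care is the uniform bound $\nabla\mathcal{H}=\mathcal{O}_{H_{2\pi}^{1}}(\varepsilon^{2})$ on $\Omega_{\rho}$, which is precisely where the relative-equilibrium hypothesis is essential — it is what makes the rescaled primaries $x_{j}(\tau)$ genuinely $2\pi$-periodic and uniformly bounded, so that the Taylor remainder defining $h$ is controlled away from collisions as in the proof of Proposition 5.
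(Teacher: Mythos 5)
Your proposal is correct and follows essentially the same route as the paper: the paper states this theorem as a direct consequence of Proposition 4 (which gives the $2\pi$-periodicity of $h$ and $\nabla_x h=\mathcal{O}(\varepsilon^{\alpha+1})=\mathcal{O}(\varepsilon^2)$ under the relative-equilibrium hypothesis), Proposition \ref{estimates}, and Theorem \ref{main}, followed by undoing the change of variables \eqref{cambio_q_m} exactly as you do. Your additional remarks — that the circular orbit remains critical for $\mathcal{A}_0(\cdot;\varepsilon)$ despite $\nu/\omega$ depending on $\varepsilon$, and the commensurability discussion for the (quasi)periodicity — fill in details the paper leaves implicit, but introduce nothing beyond its argument.
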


Actually, the moon solutions are periodic if $\omega=$ $\mathfrak{r}/\mathfrak{q}$ is
rational. For the sake of simplicity we consider only the case that $\mathfrak{p}=1$. In
those cases we have the following,

\begin{theorem}
\label{moon}Assume that $\alpha\geq1$ ($\alpha\neq2$), $q_{j}(t)=e^{Jt}a_{j}$
with \eqref{centralconf}, $\omega=$ $\mathfrak{r}/\mathfrak{q}$ and $\nu=\mathfrak{r}/\mathfrak{q}-1$. For
each integer $\mathfrak{q}$ there is an integer $\mathfrak{r}_{0}$ such that for each
integer $\mathfrak{r}>\mathfrak{r}_{0}$, the restricted $(n+1)$-body problem has at
least two $2\pi \mathfrak{q}$-periodic solutions of the form
\[
q(t)=q_{1}(t)+\varepsilon e^{J(\theta_{j}+\mathfrak{r}t/\mathfrak{q})}x_{0}+\mathcal{O}%
(\varepsilon^{3}),\qquad\varepsilon=\left(  \mathfrak{r}/\mathfrak{q}\right)  ^{-2/(\alpha
+1)}~.
\]

\end{theorem}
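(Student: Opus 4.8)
The plan is to deduce Theorem \ref{moon} from the continuum-frequency version (the preceding theorem) together with the parameter bookkeeping already set up in Proposition 4 and Theorem \ref{main}. First I would specialize the previous theorem to $\mathfrak{p}=1$: its hypotheses require $q_j(t)=e^{Jt}a_j$ with \eqref{centralconf}, $\nu=(\omega-1)/\mathfrak{p}$, and $\omega^2=\varepsilon^{-(\alpha+1)/2}$; upon setting $\mathfrak{p}=1$ this becomes $\nu=\omega-1$ and $\varepsilon=\omega^{-2/(\alpha+1)}$. The only new ingredient here is the rationality restriction $\omega=\mathfrak{r}/\mathfrak{q}$, which makes $\nu=\mathfrak{r}/\mathfrak{q}-1$ rational as well, and — crucially — forces the quasiperiodic error term $\mathcal{O}(\varepsilon^3)$ to be genuinely $2\pi\mathfrak{q}$-periodic. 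This is where Proposition 4 does the work: with $(\omega-1)/\nu=\mathfrak{p}=1\in\mathbb{Z}$ the functions $x_j(\tau)=e^{-J(\omega-1)\tau/\nu}a_j=e^{-J\tau}a_j$ are $2\pi$-periodic, $h(x,\tau)$ is $2\pi$-periodic, and $\nabla_x h=\mathcal{O}(\varepsilon^{\alpha+1})=\mathcal{O}(\varepsilon^2)$ (since $\alpha\geq1$), so Theorem \ref{main} applies verbatim and yields $x(\tau)=e^{J\theta_j}x_0+\mathcal{O}_{H^1_{2\pi}}(\varepsilon^2)$.

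Next I would translate back through the change of variables \eqref{cambio_q_m}, namely $q(t)=q_1(t)+\varepsilon e^{J\omega\tau/\nu}x(\tau)$ with $t=\tau/\nu$, so that $\omega\tau/\nu=\omega t$ and hence
\[
q(t)=q_1(t)+\varepsilon e^{J\omega t}\left(e^{J\theta_j}x_0+\mathcal{O}_{H^1_{2\pi}}(\varepsilon^2)\right)=q_1(t)+\varepsilon e^{J(\theta_j+\omega t)}x_0+\mathcal{O}(\varepsilon^3).
\]
With $\omega=\mathfrak{r}/\mathfrak{q}$ this is exactly the claimed form $q(t)=q_1(t)+\varepsilon e^{J(\theta_j+\mathfrak{r}t/\mathfrak{q})}x_0+\mathcal{O}(\varepsilon^3)$ and $\varepsilon=(\mathfrak{r}/\mathfrak{q})^{-2/(\alpha+1)}$. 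The periodicity check: $q_1(t)=e^{Jt}a_1$ is $2\pi$-periodic hence $2\pi\mathfrak{q}$-periodic; $e^{J\mathfrak{r}t/\mathfrak{q}}$ has period $2\pi\mathfrak{q}/\mathfrak{r}$, and since $\mathfrak{q}$ is an integer multiple of that period, it is $2\pi\mathfrak{q}$-periodic; and the $\mathcal{O}(\varepsilon^3)$ term, coming from $e^{J\omega\tau/\nu}\eta(\theta_j;\varepsilon)$ after rescaling $\tau=\nu t$, is $2\pi/\nu$-periodic in $t$ with $2\pi/\nu=2\pi\mathfrak{q}/(\mathfrak{r}-\mathfrak{q})$, so I would verify that $2\pi\mathfrak{q}$ is an integer multiple of this — equivalently that $\mathfrak{r}-\mathfrak{q}$ divides $\mathfrak{q}(\mathfrak{r}-\mathfrak{q})$, which is immediate. (One can also argue more cleanly: in the $\tau$ variable everything is $2\pi$-periodic, and $t=\mathfrak{q}\tau/(\mathfrak{r}-\mathfrak{q})$, so as $\tau$ runs over $[0,2\pi(\mathfrak{r}-\mathfrak{q})]$, $t$ runs over $[0,2\pi\mathfrak{q}]$, giving the common period.)

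Finally I would address the quantifier: the claim is that for each integer $\mathfrak{q}$ there is $\mathfrak{r}_0$ such that $\mathfrak{r}>\mathfrak{r}_0$ works. Theorem \ref{main} requires $\varepsilon<\varepsilon_0$; since $\varepsilon=(\mathfrak{r}/\mathfrak{q})^{-2/(\alpha+1)}\to 0$ as $\mathfrak{r}\to\infty$ with $\mathfrak{q}$ fixed, one simply takes $\mathfrak{r}_0$ large enough that $(\mathfrak{r}/\mathfrak{q})^{-2/(\alpha+1)}<\varepsilon_0$ for all $\mathfrak{r}>\mathfrak{r}_0$. The main (and really only) obstacle is making sure the hypotheses of Proposition 4 and of Theorem \ref{main} are simultaneously met under the substitution $\mathfrak{p}=1$, $\omega=\mathfrak{r}/\mathfrak{q}$ — in particular confirming that $\nu/\omega=(1-\varepsilon^{(\alpha+1)/2})/1>\delta>0$ holds uniformly as $\varepsilon\to0$ (it does, approaching $1$), so that the Hessian estimate of Proposition \ref{estimates} is uniform; everything else is routine bookkeeping of scalings and periods.
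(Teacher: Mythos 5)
Your proposal is correct and follows essentially the same route as the paper: fix $\mathfrak{q}$, take $\mathfrak{r}_0$ large enough that $\varepsilon=(\mathfrak{r}/\mathfrak{q})^{-2/(\alpha+1)}<\varepsilon_0$, apply Theorem \ref{main} via Proposition 4 with $\mathfrak{p}=1$, undo the change of variables \eqref{cambio_q_m} with $t=\tau/\nu$, and verify that $e^{J\mathfrak{r}t/\mathfrak{q}}$ and the $2\pi/\nu$-periodic remainder are both $2\pi\mathfrak{q}$-periodic. Your write-up is in fact more explicit than the paper's three-sentence proof about why the hypotheses of Proposition 4 and Theorem \ref{main} are simultaneously satisfied, but it is the same argument.
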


\begin{proof}
For $\mathfrak{q}$ fixed, there is a $\mathfrak{r}_{0}\in\mathbb{Z}$ such that
$\varepsilon=\left(  \mathfrak{r}/\mathfrak{q}\right)  ^{-2/(\alpha+1)}<\varepsilon_{0}$ if
$\mathfrak{r}>\mathfrak{r}_{0}$. Thus after the rescaling $t=\tau/\nu$, we obtain solutions
\[
q(t)=q_{1}(t)+\varepsilon e^{J\mathfrak{r}t/\mathfrak{q}}\left(  e^{J\theta_{j}}x_{0}+\mathcal{O}%
(\varepsilon^{2})\right)  ,
\]
where the function $\mathcal{O}(\varepsilon^{2})$ is $2\pi/\nu$-periodic in
$t$. Since $e^{J\omega t}$ and $\mathcal{O}(\varepsilon^{2})$ are $2\pi
\mathfrak{q}$-periodic, then the product $e^{J\omega t}\mathcal{O}(\varepsilon^{2})$ is
$2\pi \mathfrak{q}$-periodic.
\end{proof}

In the period $2\pi \mathfrak{r}$ the moon follows a small-amplitude circular orbit of
the Kepler problem around the first primary body, where the moon winds around
this primary body $\mathfrak{r}$ times, while the primary bodies travel their $2\pi
$-periodic orbit $\mathfrak{q}$ times.

\subsection{Main theorems for $\alpha=2$}

In the case $\alpha=2$, the matrices $A_{\mathfrak{p}}$ and $A_{-\mathfrak{p}}$ in Proposition
\ref{estimates} are not invertible. To avoid this problem we define the action
of the group $\mathbb{Z}_{m\mathfrak{p}}$ generated by $\zeta$,
\[
\zeta x(t)=x\left(  t-2\pi/m\mathfrak{p}\right)  ,\qquad m\geq 2.
\]
The action functional $\mathcal{A}_{0}$ is $\mathbb{Z}_{m\mathfrak{p}}$-invariant because
it does not depend on time explicitly. Therefore, the functional
$\mathcal{A=A}_{0}+\mathcal{H}$ is $\mathbb{Z}_{m\mathfrak{p}}$-invariant when
$\mathcal{H}$ is $\mathbb{Z}_{m\mathfrak{p}}$-invariant.

\begin{lemma}
The action functional $\mathcal{H}$ for the comet is $\mathbb{Z}_{m\mathfrak{p}}%
$-invariant if there is permutation $\sigma\in S_{n}$ of the set $\{1,...,n\}$
such that
\begin{equation}
m_{j}=m_{\sigma(j)},\qquad q_{j}(t+2\pi \mathfrak{q}/m\mathfrak{p})=e^{2\pi J/m}q_{\sigma
(j)}(t),\qquad j=1,...,n\text{.} \label{1}%
\end{equation}
In the case of moon, the action $\mathcal{H}$ is $\mathbb{Z}_{m\mathfrak{p}}$-invariant
if the permutation $\sigma$ satisfies $\sigma(1)=1$, $a_{1}=0$ and
\begin{equation}
m_{j}=m_{\sigma(j)},\qquad a_{j}=e^{2\pi J /m}a_{\sigma(j)},\qquad
j=2,...,n\text{.} \label{2}%
\end{equation}

\end{lemma}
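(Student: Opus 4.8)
The plan is to verify the invariance $\zeta\mathcal{H}=\mathcal{H}$ directly from the explicit formula for $h(x,\tau)$ given in Proposition 1 (comet) and Proposition 3 (moon), using the symmetry hypotheses \eqref{1} and \eqref{2} to produce a relabeling of the primaries that leaves the integrand unchanged after the shift $\tau\mapsto\tau-2\pi/m\mathfrak{p}$. Recall that $\mathcal{H}(x)=\int_0^{2\pi}h(x(\tau),\tau)\,d\tau$, so since the shift is measure-preserving on $[0,2\pi]$ it suffices to show that $h(x(\tau-2\pi/m\mathfrak{p}),\tau)$, as a functional of the shifted path, equals $h((\zeta x)(\tau),\tau)=h(x(\tau-2\pi/m\mathfrak{p}),\tau)$ — i.e., that $h$ transforms correctly under the simultaneous action on $x$ and on the primary data $x_j$. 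Concretely, I would show $h(e^{2\pi J/m}x,\tau)=h(x,\tau-2\pi/m\mathfrak{p})$ once the $x_j$ are permuted by $\sigma$; combined with the fact that $h$ depends on $x$ only through rotation-invariant quantities $\|x-\varepsilon x_j\|$ (comet) or $\|x_1-x_j+\varepsilon x\|$ and the dot product $\nabla\phi_\alpha\cdot x$ (moon), the extra rotation $e^{2\pi J/m}$ cancels.

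First, for the comet case I would compute how $x_j(\tau)$ behaves under the shift. From \eqref{cambio_qj_c}, $x_j(\tau)=e^{-J\omega\tau/\nu}q_j(\tau/\nu)$; with $\omega=\mathfrak{p}/\mathfrak{q}$, $\nu=1/\mathfrak{q}$ we have $\omega/\nu=\mathfrak{p}$ and $1/\nu=\mathfrak{q}$, so $x_j(\tau)=e^{-J\mathfrak{p}\tau}q_j(\mathfrak{q}\tau)$. Replacing $\tau$ by $\tau-2\pi/m\mathfrak{p}$ gives $x_j(\tau-2\pi/m\mathfrak{p})=e^{-J\mathfrak{p}\tau}e^{2\pi J/m}q_j(\mathfrak{q}\tau-2\pi\mathfrak{q}/m\mathfrak{p})$, and the hypothesis $q_j(t-2\pi\mathfrak{q}/m\mathfrak{p})=e^{2\pi J/m}q_{\sigma(j)}(t)$ — note \eqref{1} is stated with $+2\pi\mathfrak{q}/m\mathfrak{p}$, so I would apply it with $\sigma$ replaced by $\sigma^{-1}$, which is legitimate since $\sigma$ ranges over $S_n$ — turns this into $e^{-J\mathfrak{p}\tau}e^{4\pi J/m}q_{\sigma^{-1}(j)}(\mathfrak{q}\tau)=e^{2\pi J/m}\,e^{-J\mathfrak{p}\tau}e^{2\pi J/m}q_{\sigma^{-1}(j)}(\mathfrak q\tau)$; iterating bookkeeping shows $x_j(\tau-2\pi/m\mathfrak p)=e^{2\pi J/m}x_{\tilde\sigma(j)}(\tau)$ for an appropriate power $\tilde\sigma$ of the permutation. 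Since $m_j=m_{\sigma(j)}$, the sum $\sum_j m_j[\phi_\alpha(\|e^{2\pi J/m}x-\varepsilon x_j(\tau-2\pi/m\mathfrak p)\|)-\phi_\alpha(\|e^{2\pi J/m}x\|)]$ becomes, using orthogonality of $e^{2\pi J/m}$ and reindexing by $\tilde\sigma$, exactly $h(x,\tau)$. Thus $\zeta\mathcal{H}=\mathcal{H}$.

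Second, for the moon case the argument is the same in spirit but simpler, since from Proposition 4 we have $x_j(\tau)=e^{-J(\omega-1)\tau/\nu}a_j=e^{-J\mathfrak p\tau}a_j$ (using $(\omega-1)/\nu=\mathfrak p$), so the shift produces $x_j(\tau-2\pi/m\mathfrak p)=e^{-J\mathfrak p\tau}e^{2\pi J/m}a_j$, and \eqref{2} — giving $a_j=e^{2\pi J/m}a_{\sigma(j)}$, i.e. $e^{2\pi J/m}a_j=a_{\sigma^{-1}(j)}$ — turns this into $e^{-J\mathfrak p\tau}a_{\sigma^{-1}(j)}=x_{\sigma^{-1}(j)}(\tau)$; note the extra rotation gets absorbed here because $a_1=0=x_1(\tau)$ and $\sigma(1)=1$, so the term involving $x_1-x_j$ also transforms correctly. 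Now $h(e^{2\pi J/m}x,\tau-2\pi/m\mathfrak p)$: the arguments $\|x_1(\tau-\cdot)-x_j(\tau-\cdot)+\varepsilon e^{2\pi J/m}x\|=\|e^{2\pi J/m}(x_{\sigma^{-1}(1)}(\tau)-x_{\sigma^{-1}(j)}(\tau)+\varepsilon x)\|$ are rotation-invariant, and the linear term $\nabla\phi_\alpha(\|x_1-x_j\|)\cdot\varepsilon x$ transforms by the same cancellation of $e^{2\pi J/m}$ since the gradient is radial; reindexing by $\sigma$ and using $m_j=m_{\sigma(j)}$ gives $h(x,\tau)$. (The case $\alpha=1$ is identical, replacing $\phi_\alpha$ by $-\log$ and the radial gradient by $(x_1-x_j)/\|x_1-x_j\|^2$.)

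The main obstacle is purely bookkeeping: correctly tracking which power of $\sigma$ (and of the rotation $e^{2\pi J/m}$) appears after the shift, being careful that \eqref{1} and \eqref{2} are stated with particular signs and with $\sigma$ rather than $\sigma^{-1}$, and confirming that the accumulated rotations cancel against the rotation-invariance of the norms appearing in $h$. Once the identity $h((\zeta x)(\tau),\tau)=h(x(\tau'),\tau')$ is established pointwise (with $\tau'$ the shifted variable), the $\mathbb{Z}_{m\mathfrak p}$-invariance of $\mathcal{H}$, and hence of $\mathcal{A}=\mathcal{A}_0+\mathcal{H}$ (the invariance of $\mathcal{A}_0$ being immediate since it has no explicit time dependence), follows by the change of variables $\tau\mapsto\tau'$ in the integral.
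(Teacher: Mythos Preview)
Your overall strategy is the paper's: reduce $\mathcal{H}(\zeta x)=\mathcal{H}(x)$ to the identity $x_j(\tau+2\pi/m\mathfrak{p})=x_{\sigma(j)}(\tau)$ (equivalently $x_j(\tau-2\pi/m\mathfrak{p})=x_{\sigma^{-1}(j)}(\tau)$), which follows from \eqref{1} and \eqref{2} once you unwind $x_j(\tau)=e^{-J\mathfrak{p}\tau}q_j(\mathfrak{q}\tau)$ (comet) or $x_j(\tau)=e^{-J\mathfrak{p}\tau}a_j$ (moon). The execution, however, contains a sign error that creates a spurious rotation on $x$ and keeps the argument from closing.

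In the comet step you invert \eqref{1} incorrectly. From $q_j(t+2\pi\mathfrak{q}/m\mathfrak{p})=e^{2\pi J/m}q_{\sigma(j)}(t)$ one obtains $q_j(t-2\pi\mathfrak{q}/m\mathfrak{p})=e^{-2\pi J/m}q_{\sigma^{-1}(j)}(t)$, not $e^{+2\pi J/m}q_{\sigma(j)}(t)$. With the correct sign,
\[
x_j\!\left(\tau-\tfrac{2\pi}{m\mathfrak{p}}\right)=e^{-J\mathfrak{p}\tau}e^{2\pi J/m}\,e^{-2\pi J/m}q_{\sigma^{-1}(j)}(\mathfrak{q}\tau)=x_{\sigma^{-1}(j)}(\tau),
\]
so the rotations cancel completely and no factor $e^{2\pi J/m}$ survives. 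There is then no reason to act by a rotation on $x$; indeed $\zeta$ is a pure time shift and does not rotate $x$. With your extra rotation the argument cannot close: following your identities through the change of variable one arrives at $\mathcal{H}(\zeta x)=\mathcal{H}(e^{2\pi J/m}x)$ rather than $\mathcal{H}(\zeta x)=\mathcal{H}(x)$, and $\mathcal{H}$ is \emph{not} invariant under rotating $x$ alone (the $x_j$ do not rotate with it).

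In the moon paragraph your computation $x_j(\tau-2\pi/m\mathfrak{p})=x_{\sigma^{-1}(j)}(\tau)$ is actually correct, and precisely because of this the subsequent introduction of $e^{2\pi J/m}x$ is unnecessary and inconsistent: the claimed equality $\|x_1-x_j+\varepsilon e^{2\pi J/m}x\|=\|e^{2\pi J/m}(x_{\sigma^{-1}(1)}-x_{\sigma^{-1}(j)}+\varepsilon x)\|$ is false, since the $x_{\sigma^{-1}(\cdot)}$ carry no rotation. Once you drop the spurious rotation on $x$ in both cases, the proof reduces to the paper's: change variable $\tau\mapsto\tau+2\pi/m\mathfrak{p}$ in $\int_0^{2\pi}h(x(\tau-2\pi/m\mathfrak{p}),\tau)\,d\tau$, use $x_j(\tau+2\pi/m\mathfrak{p})=x_{\sigma(j)}(\tau)$, and reindex the sum via $m_j=m_{\sigma(j)}$ (with $\sigma(1)=1$ and $a_1=0$ in the moon case so that the distinguished term $j=1$ is preserved).
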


\begin{proof}
In the comet and moon problems, the action functional $\mathcal{H}(x)=\int
_{0}^{2\pi}h(x,\tau)d\tau$ depends on $\tau$ only through the functions
$x_{j}(\tau)$. In the comet case%
\[
\mathcal{H}(\zeta x)=\int_{0}^{2\pi}\sum_{j=1}^{n}m_{j}\left[  \phi_{\alpha
}(\left\Vert x(\tau)-\varepsilon x_{j}(\tau+2\pi/m)\right\Vert )-\phi_{\alpha
}(\left\Vert x(\tau)\right\Vert )\right]  d\tau=\mathcal{H}(x),
\]
if $m_{j}=m_{\sigma(j)}$ and $x_{j}(\tau+2\pi/m\mathfrak{p})=x_{\sigma(j)}(\tau)$ for a
permutation $\sigma$ of the elements $j\in\{1,...,n\}$. Since $x_{j}%
(\tau)=e^{-J\mathfrak{p}\tau}q_{j}(\mathfrak{q}\tau)$, because $\omega=\mathfrak{p}/\mathfrak{q}$ 
and $\nu=1/\mathfrak{q}$, this holds when
\[
e^{-2\pi J/m}e^{-J\mathfrak{p}\tau}q_{j}(\mathfrak{q}\tau+2\pi \mathfrak{q}/m\mathfrak{p})=x_{j}(\tau+2\pi/m)=x_{\sigma
(j)}(\tau)=e^{-J\mathfrak{p}\tau}q_{\sigma(j)}(\mathfrak{q}\tau).
\]
This condition is equivalent to $q_{j}(\mathfrak{q}\tau+2\pi \mathfrak{q}/m\mathfrak{p})=e^{2\pi J/m}%
q_{\sigma(j)}(\mathfrak{q}\tau)$ and to condition \eqref{1}.

The same result holds in the moon case if there is$\ $a permutation $\sigma$
such that $\sigma(1)=1$ and $x_{j}(\tau+2\pi/m\mathfrak{p})=x_{\sigma(j)}(\tau)$. Since
$(\omega-1)/\nu=\mathfrak{p}$, in this case we have $x_{j}(\tau)=e^{-J\mathfrak{p}\tau}a_{j}$.
Therefore, we require that%
\[
e^{-2\pi J/m}e^{-J\mathfrak{p}\tau}a_{j}=x_{j}(\tau+2\pi/m\mathfrak{p})=x_{\sigma(j)}(\tau
)=e^{-J\mathfrak{p}\tau}a_{\sigma(j)}.
\]
This condition holds only when $a_{j}=e^{2\pi J/m}a_{\sigma(j)}$. Since
$\sigma(1)=1$, we require that $a_{1}=0$.
\end{proof}

\begin{theorem}
\label{grav}Under the assumptions \eqref{1} for the comet solutions, and
\eqref{2} for the moon solutions, the same results as in Theorems \ref{comet}
and \ref{moon} hold in the gravitational case with $\alpha=2$.
\end{theorem}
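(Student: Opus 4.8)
The plan is to reduce Theorem \ref{grav} to the already-established machinery by restricting the whole Lyapunov-Schmidt scheme to the fixed-point subspace of the group action $\mathbb{Z}_{m\mathfrak{p}}$. First I would observe that, by the preceding Lemma, hypotheses \eqref{1} (resp. \eqref{2}) guarantee that $\mathcal{H}$ — and hence $\mathcal{A}=\mathcal{A}_0+\mathcal{H}$ — is $\mathbb{Z}_{m\mathfrak{p}}$-invariant in the comet (resp. moon) case. Let $(H_{2\pi}^1)^{\mathbb{Z}_{m\mathfrak{p}}}$ denote the subspace of functions $x$ with $\zeta x=x$; by the symmetric criticality principle, critical points of $\mathcal{A}$ restricted to this subspace are genuine critical points of $\mathcal{A}$ on $H_{2\pi}^1$. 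So it suffices to run the argument of Theorems \ref{comet} and \ref{moon} inside the invariant subspace.

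The key step is to re-examine Proposition \ref{estimates} on $(H_{2\pi}^1)^{\mathbb{Z}_{m\mathfrak{p}}}$. The action of $\zeta$ on a Fourier mode $e^{il\tau}$ is multiplication by $e^{-2\pi i l/m\mathfrak{p}}$, so the invariant subspace consists exactly of those $x$ whose only nonzero Fourier components $\hat x_l$ have $l\equiv 0 \pmod{m\mathfrak{p}}$. In particular, since $m\geq 2$, the modes $l=\pm\mathfrak{p}$ — precisely the modes where $A_{\pm\mathfrak{p}}$ fails to be invertible when $\alpha=2$ — are absent from the invariant subspace. For every remaining index $l$ (a nonzero multiple of $m\mathfrak{p}$), we have $l/\mathfrak{p}=mk$ for some nonzero integer $k$, and since $m\geq 2$ one checks $l/\mathfrak{p}\neq\pm 1=\pm\sqrt{3-\alpha}$; hence the eigenvalues $\lambda_l^{\pm}$ are nonzero and bounded away from $0$ on the invariant subspace, exactly as in the proof of Proposition \ref{estimates}. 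This gives a bounded inverse $\|(\nabla^2_{(r,\eta)}\mathcal{A}_0(\theta,1,0))^{-1}\|\leq C$ as an operator on $(H_{2\pi}^1)^{\mathbb{Z}_{m\mathfrak{p}}}$, restoring the non-degeneracy condition.

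With the restricted non-degeneracy in hand, the rest is essentially verbatim. The circle $S^1=\{e^{J\theta}x_0\}$ lies in the invariant subspace (constant-in-$\tau$ paths are trivially $\zeta$-invariant), so Proposition \ref{LS} applies inside $(H_{2\pi}^1)^{\mathbb{Z}_{m\mathfrak{p}}}$: the Implicit Function Theorem and compactness of $S^1$ yield $r(\theta;\varepsilon)$ and $\eta(\theta;\varepsilon)$ as the unique invariant solutions of $\nabla_{(r,\eta)}\mathcal{A}=0$, with $r=\mathcal{O}(\varepsilon^2)$ and $\eta=\mathcal{O}_{H^1_{2\pi}}(\varepsilon^2)$ by the same standard Lyapunov-Schmidt estimates used in Theorem \ref{main} (the bound $\nabla\mathcal{H}=\mathcal{O}_{H^1_{2\pi}}(\varepsilon^2)$ from Propositions 2 and 4 still holds). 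Reducing to $\Psi(\theta;\varepsilon)$ on $S^1\times\Lambda_{\varepsilon_0}$ and using the compactness of $S^1$ gives at least two critical points, a maximum and a minimum. Undoing the changes of variables \eqref{cambio_q_c} (resp. \eqref{cambio_q_m}) and the rescaling $t=\tau/\nu$ exactly as in the proofs of Theorems \ref{comet} and \ref{moon} produces the asserted $2\pi\mathfrak{q}$-periodic comet and moon solutions in the gravitational case $\alpha=2$.

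The main obstacle is the verification that the problematic modes $l=\pm\mathfrak{p}$ really are excluded and that no new degenerate mode appears: this is the one place where the hypothesis $m\geq 2$ is essential, and one must check that for $\alpha=2$ the only bad value of $l/\mathfrak{p}$ in Proposition \ref{estimates} is $\pm 1$ (since $\sqrt{3-\alpha}=1$ and the $l=0$ obstruction is already handled by working in the $\eta$-component), so that sieving out all $l\not\equiv 0\pmod{m\mathfrak{p}}$ suffices. Beyond this spectral bookkeeping, everything follows from symmetric criticality plus the arguments already given.
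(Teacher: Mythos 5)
Your proposal is correct and follows essentially the same route as the paper: restrict to the fixed-point subspace $(H_{2\pi}^1)^{\mathbb{Z}_{m\mathfrak{p}}}$, invoke the Palais symmetric criticality principle, check that the degenerate modes $l=\pm\mathfrak{p}$ of $\nabla^2_\eta\mathcal{A}_0$ are sieved out when $m\geq 2$, and rerun the Lyapunov--Schmidt reduction of Theorem \ref{main}. Your spectral bookkeeping is in fact more careful than the paper's (you correctly describe the fixed-point space as the set of $x$ with $\hat{x}_l=0$ for $l\not\equiv 0\pmod{m\mathfrak{p}}$, whereas the paper's displayed formula states the complementary condition, evidently a typo), so nothing further is needed.
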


\begin{proof}
The subspace of fixed points under the $\mathbb{Z}_{m\mathfrak{p}}$-action is
\[
\left(  H_{2\pi}^{1}\right)  ^{\mathbb{Z}_{m\mathfrak{p}}}=\left\{  x:\sum_{l\in
\mathbb{Z}}x_{j}e^{il(t+\pi)}=\sum_{l\in\mathbb{Z}}x_{j}e^{ilt}\right\}
=\left\{  x:x_{l}=0\quad\text{for}\quad l=0,\pm m\mathfrak{p},\pm2m\mathfrak{p},...\right\}
\]
By the Palais Criticality Principle, a critical point of $\mathcal{A}:\left(
H_{2\pi}^{1}\right)  ^{\mathbb{Z}_{m\mathfrak{p}}}\rightarrow\mathbb{R}$ is a critical
point of the action $\mathcal{A}:H_{2\pi}^{1}\rightarrow\mathbb{R}$. Then, we
can apply the previous method in the fixed point space $\left(  H_{2\pi}%
^{1}\right)  ^{\mathbb{Z}_{m\mathfrak{p}}}$. The result follows from the fact that the
Hessian $\nabla^{2}\mathcal{A}_{0}(e^{J\theta}a_{0})$ is invertible in
$\left(  H_{2\pi}^{1}\right)  ^{\mathbb{Z}_{m\mathfrak{p}}}$ if $m\geq2$.
\end{proof}

\begin{remark}
\label{ReC}For example, the choreographies in \cite{CaDoGa18b} have $n$ bodies
forming $m$-polygons at any time. In these solutions there is a permutation
$\sigma$ with $\sigma^{m}=(1)$ such that
\begin{equation}
q_{j}(t)=e^{2\pi J/m}q_{\sigma(j)}(t). \label{Ch}%
\end{equation}
The condition \eqref{1} is satisfied by the choreographies with the condition
\eqref{Ch} for $\mathfrak{q}=lm\mathfrak{p}$ with $l\in\mathbb{N}$. A particular example of such
choreographies is the super-eight choreography (see Figure 2) that satisfies
the symmetry
\[
q_{j}(t)=e^{J\pi}q_{\sigma(j)}(t)=-q_{\sigma(j)}(t),
\]
where $\sigma=(13)(24)$ is the permutation of order $m=2$.
\end{remark}

\begin{remark}
\label{ReM}An example of a central configuration that satisfies condition
\eqref{2} is the Maxwell configuration, which consists of a central mass
$a_{1}=0$ and $(n-1)$-equal masses at the vertices of a polygon $a_{j}%
=e^{2\pi J/m}x_{0}$ with $m=n-1$. The Maxwell configuration satisfies
condition \eqref{2} for the permutation $\sigma=(2~3~\ldots~n-1)$ of order
$m$. Similar configurations that satisfy the condition \eqref{2} consist of a
central mass with an arrangement of nested polygons in \cite{GaIz11} .
\end{remark}

\section{Numerical study of periodic orbits}

The variational method in the previous section has some limitations due to
assumptions imposed on the solution of the primary bodies. This section is
dedicated to exploring numerically, with the reversibility technics in
\cite{Munoz2006}, the existence of comet and moon solutions for four primaries
following the super-eight choreography. The super-eight choreography is a
periodic solution of the 4-body problem where four primaries with unitary mass
follow periodically the same path. We assume that the origin of the coordinate
system is located at the center of mass.

\subsection{Super-eight choreography of the 4-body problem}

The super-eight choreography has two special kinds of configurations that
correspond to fixed points of transformations for positions and velocities.
These special transformations are called \emph{reversing symmetries}
\cite{Lamb1998}, and we refer to its fixed points as \emph{reversible
configurations}. We use this technique to obtain periodic orbits in the
restricted 5-body problem.

During the temporal evolution of the super-eight choreography two kind of
reversible configurations appear at $\overline{T}:=\pi/4$-units of time, where
$T:=2\pi$ is the period of the orbit. One of these reversible configurations,
called \emph{isosceles configuration}, has the characteristic that the bodies
are located at the vertices of two isosceles triangles where one vertex is the
center of mass of the system. The velocities of the bodies are related by a
reflection along the basis of the corresponding isosceles triangle. The
positions $q_{j}$ and velocities $v_{j} = \dot{q}_{j}$ of the isosceles
configurations, for time $t=0$, is given by
\begin{equation}%
\begin{array}
[c]{c}%
{q}_{1}=(0.939977120285667,-0.327721385645527)^{T},\\[6pt]%
{q}_{2}=Kq_{1},\quad{q}_{3}=-{q}_{1},\quad{q}_{4}=-{q}_{2},\\[6pt]%
v_{1}=(1.122200245052303,-0.117392625737923)^{T},\\[6pt]%
v_{2}=-K v_{1},\quad v_{3}=- v_{1},\quad v_{4}=- v_{2},
\end{array}
\label{cieightiso}%
\end{equation}
where%
\begin{equation}
K=\left(
\begin{array}
[c]{cc}%
1 & 0\\[6pt]%
0 & -1
\end{array}
\right)  . \label{matrix}%
\end{equation}

At time $t=\overline{T}$ the four bodies pass by other kind of reversible
configuration. The geometric property of this configuration is that the
positions and velocities of the bodies are orthogonal; we called it
\emph{orthogonal configuration}. The positions and velocities of the four
bodies at $t=\overline{T}$ are
\begin{equation}%
\begin{array}
[c]{c}%
{q}_{1}=(1.382856843618412,0)^{T},\quad{q}_{2}=(0,0.157029922281204)^{T}%
,\\[6pt]%
{q}_{3}=-{q}_{1},\quad{q}_{4}=-{q}_{2},\\[6pt]%
v_{1}=(0,0.584872630814899)^{T}, v_{2}=(1.871935245878693,0)^{T},\\[6pt]%
v_{4}=- v_{2},\quad v_{3}=- v_{1}.
\end{array}
\label{cieightort}%
\end{equation}

The isosceles and orthogonal reversible configurations of the Gerver's super
eight are shown in Fig. \ref{figure2}. We remark that these configurations are
fixed points of reversing symmetries in the 4-body problem with equal masses
\cite{Munoz2006}.

\begin{figure}[h]
\centering
\captionsetup{width=.9\linewidth} \includegraphics[width=75mm]{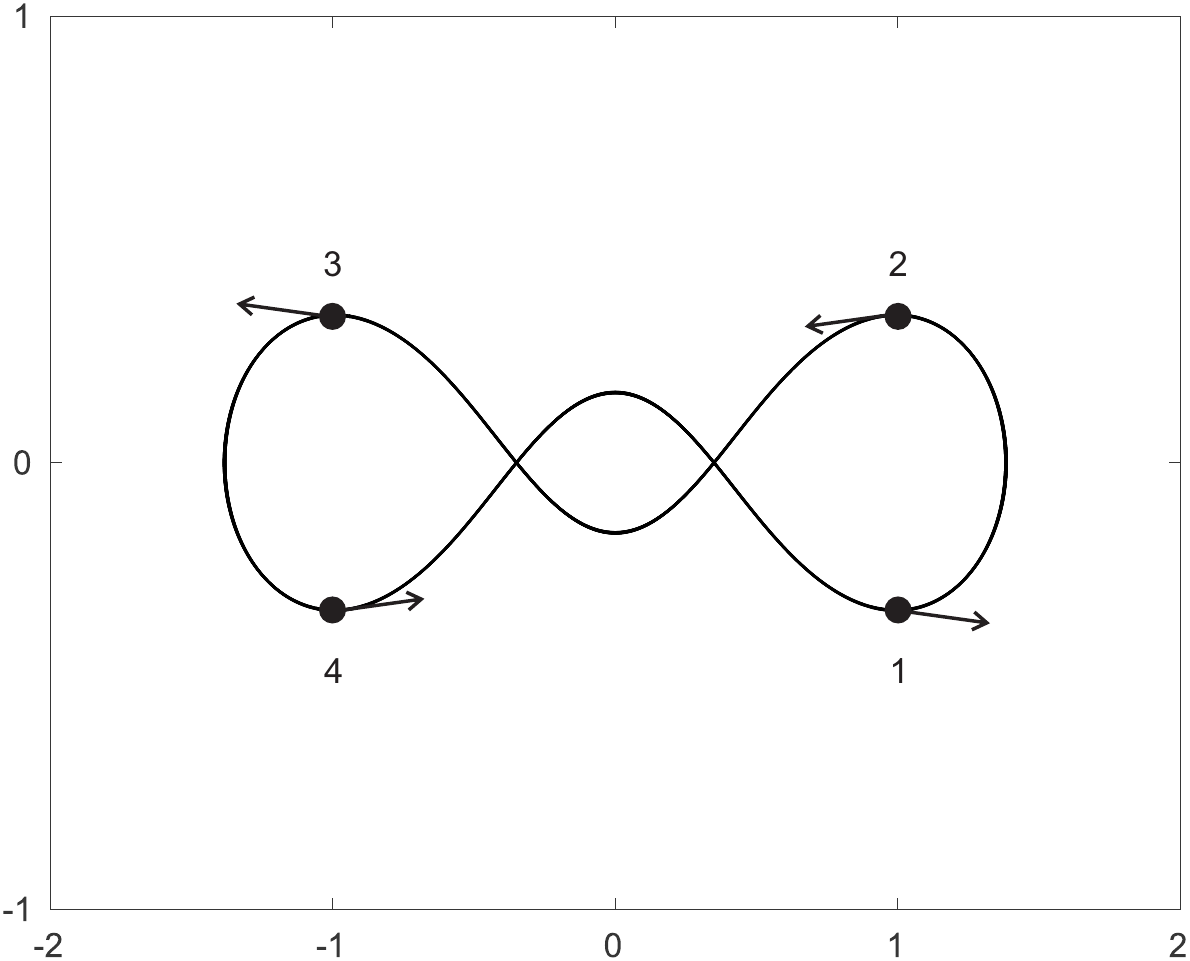}
\includegraphics[width=75mm]{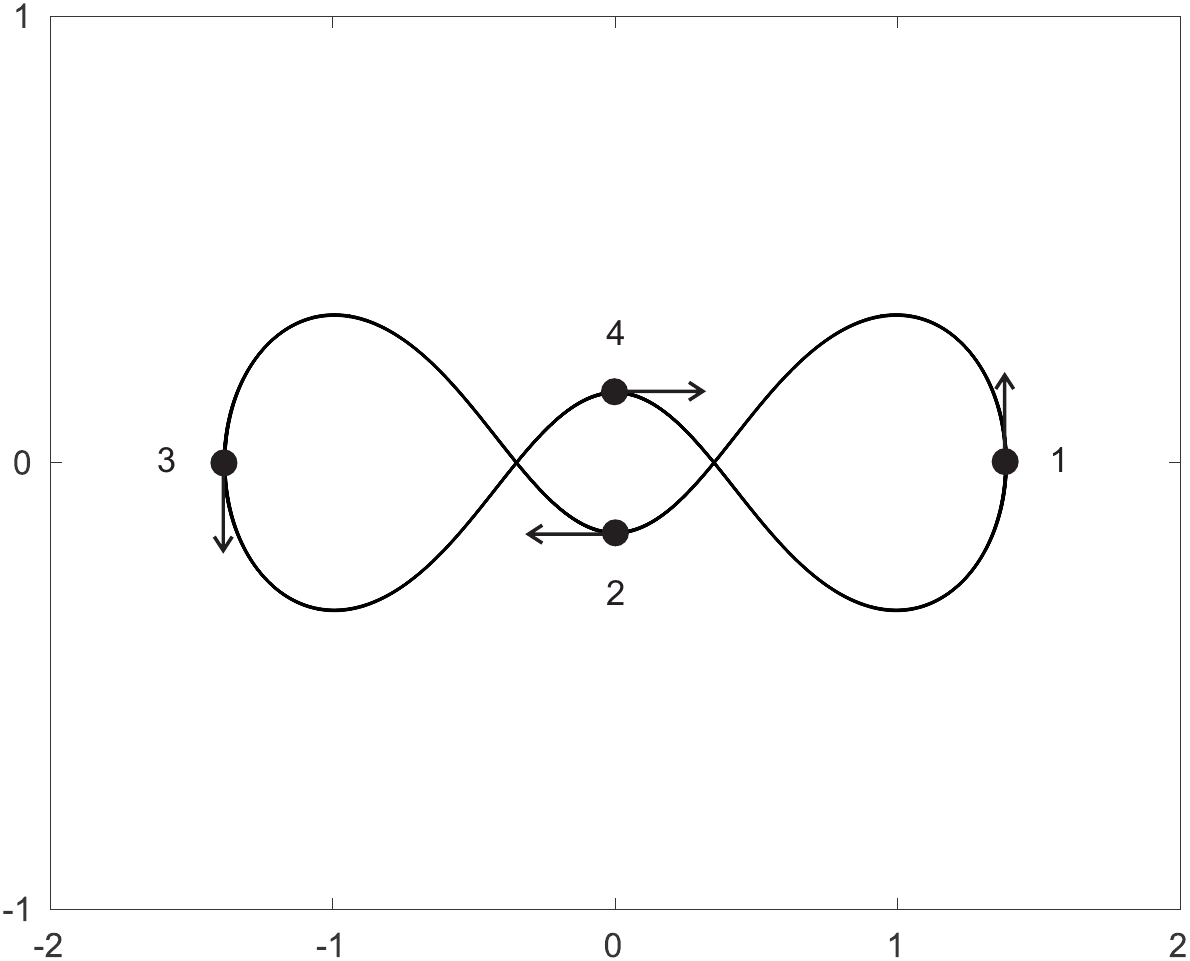} \caption{Gerver's super eight
choreography. At left an isosceles reversible configuration, at time $t=0$. At
right an orthogonal reversible configuration, at time $t = \overline{T}$.}%
\label{figure2}%
\end{figure}

\subsection{The restricted 5-body problem for the super-eight choreography}

Consider a system of five particles, four of them with unitary mass $m_{i}=1$,
$i=1,2,3,4$, following the periodic super-eight choreography, and the fifth
$i=5$ as a test particle moving under the force exerted by the other four. The
equations of motions of the five particles are given by the expressions
\begin{equation}
\ddot{q}_{i}(t)=-\sum_{j=1,j\neq i}^{5}m_{j}\frac{q_{i}(t)-q_{j}%
(t)}{\left\Vert q_{i}(t)-q_{j}(t)\right\Vert ^{3}},\enskip i=1,\cdots,5.
\label{eqmot}%
\end{equation}
We assume that the origin of the coordinate system coincides with the center
of mass of the choreographic bodies. We restrict \eqref{eqmot} in such a way
the bodies with unitary mass always follow the super-eight 
choreography. So, for the bodies with unitary mass we choose as initial
conditions \eqref{cieightiso} or \eqref{cieightort}. In consequence, only the
initial conditions of the fifth particle are unknown. Our aim is to determine
initial conditions of the fifth particle that lead to periodic orbits of comet
or moon type. We will consider reversible configurations of the restricted
5-body problem that are consistent with the Gerver's super-eight choreography
as we explain in the following sections.

\subsection{Reversing symmetries of the restricted five-body problem}

In order to introduce the results related with reversing symmetries, we need
to rewrite the equations of motion \eqref{eqmot}. For this aim, we introduce
the state vector of the five bodies
\begin{equation}
u(t)=(q_{1}(t),\cdots,q_{5}(t), v_{1}(t),\cdots, v_{5}(t)). \label{u}%
\end{equation}
The equations of motion can be written as
\[
\dot{u}(t)=( v_{1}(t),\cdots, v_{5}(t),a_{1}(t),\cdots,a_{5}(t)),
\]
where
\begin{equation}
{a}_{i}(t)=-\sum_{j=1,j\neq i}^{5}m_{j}\frac{q_{i}(t)-q_{j}(t)}{\left\Vert
q_{i}(t)-q_{j}(t)\right\Vert ^{3}},\quad i=1,\cdots,5. \label{acel}%
\end{equation}
With this, the equation \eqref{eqmot} becomes
\begin{equation}
\frac{d{u}(t)}{dt}=F(u(t)),\quad(F_{2i-1},F_{2i})=q_{i}(t),\quad
(F_{8+2i-1},F_{8+2i})=a_{i}(t),\quad i=1,\cdots,5. \label{odeF}%
\end{equation}
We notice that system \eqref{odeF} is not defined at collisions. That is,
defining
\[
\Delta_{ij}=\{(q_{1},\cdots,q_{5})\in\mathbb{R}^{10}\quad|\quad q_{i}%
=q_{j}\},\qquad\Delta=\bigcup_{i<j}\Delta_{ij},
\]
we have that the system \eqref{odeF} is defined only for $u\in\Omega
=\mathbb{R}^{10}\setminus\Delta\times\mathbb{R}^{10}$.

\begin{definition}
Given the ordinary differential equation \eqref{odeF}, we say that an
involution $R:\Omega\to\Omega$ is a reversing symmetry if
\[
\frac{d}{dt}R({u}(t)) = -F(R(u(t)))
\]
holds. The set of fixed points of $R$ is denoted as $\mathbf{Fix}(R)$.
\end{definition}

The reversing symmetries are useful for determining symmetric periodic orbits.
The Mu\~{n}oz-Almaraz's Theorem \cite{Munoz1998} gives the relation between
the reversing symmetries and symmetric periodic orbits. We state this theorem
applied to our restricted 5-body problem.

\begin{theorem}
\label{rev} Let $u(t)=(q_{1}(t),\cdots,q_{5}(t), v_{1}(t),\cdots,{v}_{5}(t))$
be a well-defined solution of the 5-body problem in the time interval
$[0,T_{0}]$. Suppose that the solution $u(t)$ passes through fixed points of
reversing symmetries $R$ and $\widehat{R}$ at times $t=0$ and $t=T_{0}$,
respectively. Then the solution is defined for all $t\in\mathbb{R}$, and we
have for all $m\in\mathbb{Z}$ that
\[%
\begin{array}
[c]{c}%
u(-t)=Ru(t),\\[6pt]%
u(t)=\widehat{R}u(2T_{0}-t),\\[6pt]%
u(2mT_{0}+t)=(\widehat{R}R)^{m}u(t).
\end{array}
\]
In addition, if there exists some $M\in\mathbb{N}$ such that $(\widehat
{R}R)^{M}=id$ then $u(t)$ is periodic, with period $T=2MT_{0}$.
\end{theorem}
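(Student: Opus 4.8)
The plan is to run the classical reversing-symmetry argument, whose only real ingredient is uniqueness of solutions of \eqref{odeF} on the collision-free set $\Omega$. First I would note that, since through every point of $\Omega$ there passes a solution of \eqref{odeF}, the defining identity $\frac{d}{dt}R(u(t))=-F(R(u(t)))$ is equivalent to the pointwise relation $DR(y)F(y)=-F(R(y))$ for all $y\in\Omega$, and similarly for $\widehat R$. From this a one-line computation shows that whenever $u(\cdot)$ solves \eqref{odeF} on an interval $I$, the map $t\mapsto R(u(-t))$ solves it on $-I$ and the map $t\mapsto\widehat R(u(2T_0-t))$ solves it on $2T_0-I$.

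Second, I would use the two fixed-point hypotheses to pin these maps down by uniqueness. The solution $v(t):=\widehat R(u(2T_0-t))$ is defined on $[T_0,2T_0]$ (there $2T_0-t\in[0,T_0]$) and satisfies $v(T_0)=\widehat R(u(T_0))=u(T_0)$ and $\dot v(T_0)=F(u(T_0))=\dot u(T_0)$; hence the concatenation of $u|_{[0,T_0]}$ with $v|_{[T_0,2T_0]}$ is a $C^1$ solution on $[0,2T_0]$ taking values in $\Omega$ because $\widehat R(\Omega)=\Omega$. This is precisely the relation $u(t)=\widehat R\,u(2T_0-t)$. Running the same argument at $t=0$ with $R$ and the now-available restriction $u|_{[0,2T_0]}$ gives $u(-t)=R\,u(t)$ and extends $u$ to $[-2T_0,2T_0]$.

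Third, I would combine the two: replacing $t$ by $2T_0+t$ in $u(t)=\widehat R\,u(2T_0-t)$ and using $u(-t)=R\,u(t)$ yields $u(2T_0+t)=\widehat R(u(-t))=(\widehat R R)u(t)$. Since $\widehat R R$ is a genuine symmetry of \eqref{odeF} (one checks $D(\widehat R R)(y)F(y)=F(\widehat R R\,y)$ directly from the two pointwise relations) and a diffeomorphism of $\Omega$, this relation both propagates the solution from $[-2T_0,2T_0]$ to all of $\mathbb{R}$ without collisions and, by induction on $m$, gives $u(2mT_0+t)=(\widehat R R)^m u(t)$ for every $m\in\mathbb{Z}$. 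Finally, if $(\widehat R R)^M=\mathrm{id}$ then $u(2MT_0+t)=(\widehat R R)^M u(t)=u(t)$, so $u$ is periodic with period $T=2MT_0$.

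The step I expect to be the only genuinely delicate point is the passage from ``defined on $[0,T_0]$'' to ``defined for all $t\in\mathbb{R}$'': one must verify that the symmetry relations really carry along the no-collision property — which is exactly why it matters that $R$ and $\widehat R$ are required to be self-maps of $\Omega$ and not merely of $\mathbb{R}^{20}$ — and that the pieces glued at the times $2mT_0$ assemble into a bona fide $C^1$ solution; both points are settled by the uniqueness theorem once the reflection identities above are established.
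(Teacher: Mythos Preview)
Your argument is correct and is precisely the classical reversing-symmetry proof. Note, however, that the paper does not supply its own proof of this theorem: it is quoted as Mu\~noz-Almaraz's Theorem from \cite{Munoz1998} and merely restated in the setting of the restricted 5-body problem, so there is nothing in the paper to compare your approach against. Your bookkeeping of domains is sound---the relation $u(t)=\widehat R\,u(2T_0-t)$ is first available on $[0,2T_0]$, so substituting $t\mapsto 2T_0+t$ is legitimate for $t\in[-2T_0,0]$, which is exactly the range needed to obtain $u(2T_0+t)=(\widehat R R)u(t)$ and bootstrap the extension---and your remark that $R,\widehat R$ must preserve $\Omega$ for the no-collision property to propagate is the right observation.
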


In order to apply Theorem \ref{rev} to the restricted 5-body problem, we need
to consider fixed points of reversing symmetries of that problem. The
super-eight choreography is a solution of the 4-body problem that passes
through fixed points of specific reversing symmetries of the 4-body problem.
In our case, we consider reversing symmetries of the 5-body problem that are
consistent with the super-eight choreography. We remark that these reversing
symmetries of the 5-body problem also hold for our restricted 5-body problem.
There are several reversing symmetries that allow to obtain periodic orbits of
comet and moon type. We only consider the reversing symmetries
\[
\Phi_{1x}:%
\begin{array}
[c]{ll}%
q_{1}\rightarrow Kq_{2}, & v_{1}\rightarrow-Kv_{2},\\
q_{2}\rightarrow Kq_{1}, & v_{2}\rightarrow-Kv_{1},\\
q_{3}\rightarrow Kq_{4}, & v_{3}\rightarrow-Kv_{4},\\
q_{4}\rightarrow Kq_{3}, & v_{4}\rightarrow-Kv_{3},\\
q_{5}\rightarrow Kq_{5}, & v_{5}\rightarrow-Kv_{5},
\end{array}
\quad\Phi_{1y}:%
\begin{array}
[c]{ll}%
q_{1}\rightarrow Kq_{2}, & v_{1}\rightarrow-Kv_{2},\\
q_{2}\rightarrow Kq_{1}, & v_{2}\rightarrow-Kv_{1},\\
q_{3}\rightarrow Kq_{4}, & v_{3}\rightarrow-Kv_{4},\\
q_{4}\rightarrow Kq_{3}, & v_{4}\rightarrow-Kv_{3},\\
q_{5}\rightarrow-Kq_{5}, & v_{5}\rightarrow Kv_{5},
\end{array}
\]%
\[
\Psi_{1x}:%
\begin{array}
[c]{ll}%
q_{1}\rightarrow Kq_{1}, & v_{1}\rightarrow-Kv_{1},\\
q_{2}\rightarrow-Kq_{2}, & v_{2}\rightarrow Kv_{2},\\
q_{3}\rightarrow Kq_{3}, & v_{3}\rightarrow-Kv_{3},\\
q_{4}\rightarrow-Kq_{4}, & v_{4}\rightarrow Kv_{4},\\
q_{5}\rightarrow Kq_{5}, & v_{5}\rightarrow-Kv_{5},
\end{array}
\quad\Psi_{1y}:%
\begin{array}
[c]{ll}%
q_{1}\rightarrow Kq_{1}, & v_{1}\rightarrow-Kv_{1},\\
q_{2}\rightarrow-Kq_{2}, & v_{2}\rightarrow Kv_{2},\\
q_{3}\rightarrow Kq_{3}, & v_{3}\rightarrow-Kv_{3},\\
q_{4}\rightarrow-Kq_{4}, & v_{4}\rightarrow Kv_{4},\\
q_{5}\rightarrow-Kq_{5}, & v_{5}\rightarrow Kv_{5},
\end{array}
\]
where $K$ is the matrix defined in \eqref{matrix}. The set Fix$(\Phi_{1x})$ is
given by the points \eqref{u}, where $q_{j}=(q_{jx},q_{jy})$, $v_{j}=( v_{jx},
v_{jy})$, $j=1,\cdots,5$ satisfy
\begin{equation}%
\begin{array}
[c]{l}%
q_{2x}=q_{1x},\enskip q_{2y}=-q_{1y},\enskip q_{3x}=-q_{1x},\enskip q_{3y}%
=-q_{1y},\enskip q_{4x}=-q_{2x},\enskip q_{4y}=-q_{2y},\enskip q_{5y}=0,\\
v_{2x}=- v_{1x},\enskip v_{2y}= v_{1y},\enskip\dot{q}_{3x}=- v_{1x}%
,\enskip v_{3y}=- v_{1y},\enskip v_{4x}=- v_{2x},\enskip v_{4y}=-
v_{2y},\enskip v_{5x}=0.
\end{array}
\label{revt=0}%
\end{equation}
On the other hand, for the set Fix$(\Psi_{1x})$ we have
\begin{equation}%
\begin{array}
[c]{l}%
q_{1y}=0,\enskip q_{2x}=0,\enskip q_{3x}=-q_{1x},\enskip q_{3y}=-q_{1y}%
,\enskip q_{4x}=-q_{2x},\enskip q_{4y}=-q_{2y},\enskip q_{5y}=0,\\
v_{1x}=0,\enskip v_{2y}=0,\enskip v_{3x}=- v_{1x},\enskip v_{3y}=-
v_{1y},\enskip v_{4x}=- v_{2x},\enskip v_{4y}=- v_{2y},\enskip v_{5x}=0.
\end{array}
\label{revt=1}%
\end{equation}

The sets Fix$(\Phi_{1y})$ and Fix$(\Psi_{1y})$ are obtained from
\eqref{revt=0} and \eqref{revt=1}, respectively, by replacing the conditions
for the fifth particle by ${q}_{5x}=0$, $v_{5y}=0$. We say that Fix$(\Phi
_{1x})$ and Fix$(\Phi_{1y})$ are \emph{isosceles reversible configurations},
whereas Fix$(\Psi_{1x})$ and Fix$(\Psi_{1y})$ \emph{orthogonal reversible
configurations}, of the restricted five-body problem. We remark that these
reversible configurations hold for both full and restricted 5-body problems.
For the restricted 5-body problem, we are interested in those orbits that pass
through two fixed points of some reversing symmetries. We will consider
specific combinations of reversing symmetries that lead to comet and moon orbits.

\subsection{Comet orbits}

For the numerical computation of comet orbits of the restricted 5-body
problem, we consider two different combinations of reversible configurations.
First, we deal with the solutions that pass through points within
Fix$(\Phi_{1x})$ at $t=0$, and Fix$(\Phi_{1y})$ at some time $t=T_{0}$.
According to the initial condition of the choreographic bodies
\eqref{cieightiso}, the orbit of the four primary bodies is consistent with
Fix$(\Phi_{1y})$ if and only if $T_{0}=4m\overline{T}$, $m\in\mathbb{Z}$.
Thus, if we have a solution $u(t)$ which meet $u(0)\in$ Fix$(\Phi_{1x})$,
$u(T_{0})\in$ Fix$(\Phi_{1y})$ with $T_{0}=4m\overline{T}$ for some
$m\in\mathbb{N}$, by Theorem \ref{rev} and the relation between the reversing
symmetries
\[
(\Phi_{1y}\Phi_{1x})^{2}=id,
\]
we get that $u(t)$ is periodic (in inertial frame) with period $T=4T_{0}$. For
a second combination of reversing symmetries consider $u(t)$ such that
$u(0)\in$ Fix$(\Phi_{1x})$, $u(T_{0})\in$ Fix$(\Phi_{2y})$ with $T_{0}%
=(4m+2)\overline{T}$ for some $m\in\mathbb{N}$. In this case the orbit is
periodic since $(\Phi_{2y}\Phi_{1x})^{2}=id$, with period $T=4T_{0}$.

In order to compute numerically the initial conditions of $u(t)$ for the first
combination (for the second one we could follow a similar approach), we need
to find a time $T_{0}=4m\overline{T}$ with $m\in\mathbb{N}$, and initial
conditions $q_{5} = (\alpha,0)$, $v_{5} = (0,\beta)$, $\alpha$, $\beta
\in\mathbb{R}$, in such a way the equalities%
\begin{equation}%
\begin{array}
[c]{l}%
\phi_{0}(\alpha,0,0,\beta)=(\alpha,0,0,\beta),\\
\phi_{T_{0}}(\alpha,0,0,\beta)=(\gamma,0,0,\delta),
\end{array}
\label{numiso}%
\end{equation}
hold. Here $\phi$ is the flow associated to the equations of motions
(restricted problem), and $\gamma$, $\delta$ are arbitrary real numbers. The
first equation \eqref{numiso} establish the condition $u(0)\in$ Fix$(\Phi
_{1x})$, whereas the second one $u(T_{0})\in$ Fix$(\Phi_{1y})$. In this case
both $q_{5}=(\alpha,0)$, $v_{5}=(0,\beta)$ and \eqref{cieightiso} define an
initial condition of a periodic orbit of comet type in the restricted 5-body
problem. For details about the numerical computation of these orbits the
reader is referred to the restricted $4$-body problem in \cite{Lara2019}.

The main property of the comet orbits is that the test particle is located far
away from the primaries. In that case, the primaries exert a force
approximately equivalent to that of a body with mass $m=4$ over the test
particle. In Figure \ref{figure1} we have shown several orbits of this type
and in Table \ref{table-1} the corresponding initial conditions.

\begin{table}[ptb]
\centering
\captionsetup{width=.9\linewidth}
\begin{tabular}
[c]{|c|c|c|}\hline
\rule{0pt}{4ex} $\displaystyle \overline{T}$ & $\alpha$ & $\beta$\\\hline
$2\pi$ & 4.116104103490420 & 1.044999754887220\\\hline
$5\pi/2$ & 4.742060123223827 & 0.958945634262276\\\hline
$3\pi$ & 5.330615961036938 & 0.896037359621114\\\hline
$7\pi/2$ & 5.889293694917488 & 0.847128753375993\\\hline
$4\pi$ & 6.423300718815878 & 0.807515201172657\\\hline
$\pi/2$ & 1.469992697921058 & 3.966907060848269\\\hline
\end{tabular}
\caption{Positions and velocities $q_{5}=(\alpha,0)$, $v_{5}=(0,\beta)$ of the
fifth particle that give rise to periodic orbits in the restricted 5-body
problem. The first five rows and \eqref{cieightiso} define initial conditions
of comet orbits. On the other hand, the sixth row and \eqref{cieightort}
define initial conditions of moon orbits.}%
\label{table-1}%
\end{table}

\subsection{Moon orbits}

The moon orbits are the opposite of the comet orbits, they are characterized
by the property that the test particle moves around some of the primaries,
acting as satellite. In this case these bodies conform approximately a binary
system. We assume that the primary involved with the test particle is the body
with index $i=1$.

In order to compute periodic orbits of moon type we also use the technic of
reversing symmetries. In this case the isosceles reversible configuration of
the restricted 5-body problem is not consistent with the super-eight
choreography and the moon orbits. Nevertheless, the orthogonal reversible
configurations are consistent with the super-eight choreography and the moon
orbits, which are precisely the fixed points of $\Psi_{1x}$ and $\Psi_{1y}$.
In order to define a boundary value problem where the initial condition is
involved, we make a temporal shift of $\overline{T}$ units in the equations of
motion \eqref{eqmot}. Therefore, by means of introducing the new time
$\tau=t-\overline{T}$, the orthogonal reversible configuration which happens
at $t = \overline{T}$ is associated to the new time $\tau=0$. Thus, consider a
solution $u(\tau)$ in such a way $u(0)\in$ Fix $(\Psi_{1x})$ and $u(T_{0})\in$
Fix $(\Psi_{1y})$, which can happen only if $T_{0}=4m\overline{T}$ for some
$m\in\mathbb{N}$. According to Theorem \ref{rev}, and the relation
\[
(\Psi_{1y}\Psi_{1x})^{2}=id,
\]
the solution $u(\tau)$ is periodic (in inertial frame), with period $T=4T_{0}$.

In order to compute the initial conditions of periodic orbits, we proceed as
we did for the comet case (see \eqref{numiso}). We have to find a time
$T_{0}=4m\overline{T}$ with $m\in\mathbb{N}$, and values $\alpha,\beta
\in\mathbb{R}$, in such a way that the equalities
\begin{equation}%
\begin{array}
[c]{l}%
\phi_{0}(\alpha,0,0,\beta)=(\alpha,0,0,\beta),\\
\phi_{T_{0}}(\alpha,0,0,\beta)=(0,\gamma,\delta,0),
\end{array}
\label{numort}%
\end{equation}
are fulfilled, where $\gamma$, $\delta$ are arbitrary real numbers. The first
equation \eqref{numort} implies that $u(0)\in$ Fix$(\Psi_{1x})$, whereas the
other one that $u(T_{0})\in$ Fix$(\Psi_{1y})$. The initial condition of the
restricted 5-body problem (with shifted time) is given by $q_{5}=(\alpha,0)$,
$v_{5}=(0,\beta)$ and \eqref{cieightort}. In Figure \ref{figure1} we have
shown a periodic and symmetric moon orbit of the restricted 5-body problem; in
Table \ref{table-1} we give the corresponding initial condition.


\end{document}